\documentclass[11pt,a4paper]{amsart}

\usepackage[T1]{fontenc}
\usepackage[utf8]{inputenc}
\usepackage{lmodern}
\usepackage{microtype}
\usepackage{amsmath,amssymb,mathtools,mathrsfs}
\usepackage[a4paper,margin=1in]{geometry}
\usepackage[hidelinks]{hyperref}
\usepackage[
  backend=biber,
  style=numeric,
  sortcites=true,
  sorting=nyt,
  doi=true,
  url=true,
  isbn=false
]{biblatex}
\hypersetup{
  pdftitle={Existence of bases implies the axiom of choice, a foundation-free proof},
  pdfsubject={Axiom of choice and vector space bases in ZF without Foundation},
  pdfkeywords={Axiom of choice, axiom of foundation, axiom of multiple choice,
    vector space bases, normal field extensions}
}

\allowdisplaybreaks[1]

\newcommand{\ZF}{\mathsf{ZF}}
\newcommand{\AC}{\mathsf{AC}}
\newcommand{\MC}{\mathsf{MC}}
\newcommand{\ZFm}{\mathsf{ZF}^{-}}
\newcommand{\Pow}{\mathcal P}
\newcommand{\Q}{\mathbb Q}
\newcommand{\Ecal}{\mathcal E}
\newcommand{\Ccal}{\mathcal C}
\newcommand{\Lcal}{\mathcal L}
\newcommand{\Var}{\mathsf{Var}}
\newcommand{\Symbols}{\mathsf{Symbols}}
\DeclareMathOperator{\Gal}{Gal}
\DeclareMathOperator{\Emb}{Emb}
\DeclareMathOperator{\Bas}{Bas}
\DeclareMathOperator{\coeff}{coeff}
\DeclareMathOperator{\dom}{dom}

\newtheorem{theorem}{Theorem}[section]

\newtheorem{overviewtheorem}{Theorem}

\newtheorem{lemma}[theorem]{Lemma}
\newtheorem{corollary}[theorem]{Corollary}
\theoremstyle{definition}
\newtheorem{definition}[theorem]{Definition}
\theoremstyle{remark}

\newtheorem{claim}{Claim}

\title[Vector space bases without Foundation]
{Existence of bases implies the axiom of choice, a foundation-free proof}

\author{Gabriel Fernandes}
\address{Department of Mathematics, Institute of Mathematics and Computer
Sciences, University of S\~ao Paulo, S\~ao Carlos, SP, Brazil}
\email{fernandes@icmc.usp.br}

\author{Renan Maneli Mezabarba}
\address{Department of Exact Sciences, State University of Santa Cruz,
Ilh\'eus, BA, Brazil}
\email{rmmezabarba@uesc.br}

\author{Vinicius de Oliveira Rodrigues}
\address{Department of Mathematics, Institute of Mathematics, Statistics and
Computer Science, University of S\~ao Paulo, S\~ao Paulo, SP, Brazil}
\email{vinior@ime.usp.br}

\date{}
\subjclass[2020]{Primary 03E25; Secondary 03E30, 12F10, 15A03}
\keywords{Axiom of choice, axiom of foundation, axiom of multiple choice,
vector space bases, normal field extensions}

\begin{document}

\begin{abstract}
We prove that, in Zermelo--Fraenkel set theory with the axiom of
Foundation removed, the statement that every vector space has a basis implies the Axiom of Choice ($\AC$), concluding that the classical equivalence between $\AC$ and the existence of bases does not require regularity.
This result extends to set theory with atoms.
\end{abstract}

\maketitle

\section{Introduction}

The existence of bases for vector spaces is one of the most familiar
applications of the Axiom of Choice ($\AC$), usually proved by Zorn's
lemma. Its early history includes Hamel's construction of a basis of
$\mathbb R$ over $\mathbb Q$, shortly after Zermelo's proof of the
well-ordering theorem~\cite{Hamel1905,Zermelo1904}, and Hausdorff's
treatment of abstract real linear spaces~\cite[p.~295]{Hausdorff1932}.
The case of arbitrary fields appears in Zorn's 1935 treatment of the
principle now known as Zorn's lemma~\cite{Zorn1935}. The converse
question, whether the existence of bases implies $\AC$, proved much
more difficult.

In Zermelo--Fraenkel set theory ($\ZF$), this converse was eventually
established by Blass~\cite{Blass}, by showing that the existence of
bases implies the Axiom of Multiple Choice ($\MC$). This axiom
permits the simultaneous selection of a nonempty finite subset from
each member of a family of nonempty sets. His proof then uses the
equivalence of $\MC$ and $\AC$ in $\ZF$~\cite{FelgnerJech1973}.
The two steps differ in their use of the Axiom of Foundation: the
first does not require it, whereas the second does.

To examine whether Foundation is necessary for the converse itself,
we work in Zermelo--Fraenkel set theory with the Axiom of Foundation
omitted ($\ZFm$). In this setting, the part of Blass's argument that
we shall use is the following.

\begin{theorem}[Blass {\cite{Blass}}]\label{thm:blass}
In $\ZFm$, if every vector space over a field of characteristic zero has a basis, then $\MC$ holds.
Thus,
\[
    \ZF\vdash \text{``every vector space has a basis''} \leftrightarrow \AC.
\]
\end{theorem}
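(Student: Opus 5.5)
We will reproduce Blass's argument and check that it never uses Foundation. Let $\mathcal X=\{X_i : i\in I\}$ be a family of pairwise disjoint nonempty sets; we may assume disjointness, since replacing $X_i$ by $\{i\}\times X_i$ uses no Foundation. The goal is to choose a nonempty finite subset of each $X_i$. Let $X=\bigcup_i X_i$, viewed as a set of indeterminates, and let $k=\Q(X)$ be the field of rational functions over $\Q$ in these indeterminates. For each $i$ and $n\ge 1$, let $p_{i,n}$ be the $n$-th elementary symmetric polynomial in the finitely many variables of a finite subset of $X_i$. More uniformly, use \emph{degree} with respect to each $X_i$: say a monomial is \emph{$i$-homogeneous of degree $d$} if its total degree in variables from $X_i$ is $d$. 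Let $K\subseteq k$ be the subfield of rational functions that are $i$-homogeneous of degree $0$ for every $i$, i.e. of the form $f/g$ with $f,g$ of equal $i$-degree for each $i$. Regard $k$ as a vector space over $K$. All these objects are defined from $\mathcal X$ by explicit comprehension over finite strings of symbols, so nothing requires well-founded sets.

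\textbf{Step 1.} Let $V\subseteq k$ be the $K$-subspace spanned by $X$. By hypothesis (characteristic zero), $V$ has a basis $B$. Each $x\in X_i$ is written uniquely as $x=\sum_{b\in B}\lambda_b(x)\, b$, a finite sum. \textbf{Step 2.} Decompose into $i$-degree components. One shows $V=\bigoplus_i V_i$, where $V_i$ is the $K$-span of $X_i$: elements of $V$ are exactly sums of $i$-homogeneous degree-$1$ pieces, and distinct $i$ give distinct multidegrees. Any $x,y\in X_i$ satisfy $y/x\in K$, so $V_i=Kx$ is one-dimensional, spanned by any element of $X_i$. Projecting $B$ into $V_i$, let $S_i=\{b\in B: \pi_i(b)\ne 0\}$, where $\pi_i$ is the projection; this set is nonempty and finite relative to a fixed $x$. \textbf{Step 3.} Extract a finite subset of $X_i$ from this data without choosing $x$. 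Fix $x\in X_i$ and form the finite list of coefficients $\lambda_b(x)\in K$ for $b\in B$. Changing $x$ to $y=(y/x)x$ multiplies every coefficient by the same element $y/x\in K$. Hence the \emph{ratios} $\lambda_b(x)/\lambda_{b'}(x)$ are independent of $x$. These are finitely many nonzero elements of $K$ canonically attached to $i$, and each is a rational function in finitely many variables. The set of variables from $X_i$ occurring in them is finite. The difficulty is that it may be empty, since the ratios could all lie in $\Q$ or involve only other $X_j$'s.

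\textbf{Main obstacle.} When the degree-$0$ invariants carry no information, this construction fails, and Blass's actual trick must be used. One works instead with the field $K$ generated by all $i$-homogeneous degree-$0$ elements, and exploits that $\lambda_b(x)$ itself, for the distinguished $b$, is homogeneous of degree $1$ in $X_i$. I would try the following. Normalize by choosing the unique $b$-coefficient representation in lowest terms with integer coefficients of content $1$, which is canonical up to sign. The numerator $f$ of $\lambda_b(x)\cdot x^{-1}\cdot x$ is then a nonzero polynomial that is homogeneous of degree $1$ in $X_i$ and defined independently of $x$ up to sign. The set of variables of $X_i$ occurring in $f$ is then nonempty and finite, giving the desired finite subset. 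Verifying that this numerator is genuinely independent of choices is the step I expect to need care. Unique factorization in $\Q[X]$ is provable in $\ZFm$, since each polynomial involves finitely many variables. Finally, for the displayed equivalence, note that in $\ZF$ we have $\MC\to\AC$~\cite{FelgnerJech1973}, and $\AC$ gives bases by Zorn's lemma.
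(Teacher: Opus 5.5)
\textbf{There is a genuine gap: the decisive step, extracting a nonempty finite subset of $X_i$ without choices, is never achieved.} The paper does not reprove this theorem; it cites Blass. Your setup does match Blass's argument: the field $\Q(X)$, the subfield $K$ of elements that are $i$-homogeneous of degree $0$ for every $i$, the $K$-span $V$ of $X$ with basis $B$, and the observation that $\lambda_b(y)=(y/x)\lambda_b(x)$ for $x,y\in X_i$. The trouble is the final step. Your last candidate is the numerator of $\lambda_b(x)\cdot x^{-1}\cdot x=\lambda_b(x)$, and it fails in three ways:
\begin{enumerate}
\item Since $\lambda_b(x)\in K$, it is $i$-homogeneous of degree $0$, not $1$.
\item It depends on $x$. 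If $x\in B$, then $\lambda_x(x)=1$ but $\lambda_x(y)=y/x$, so the numerators are $1$ and $y$.
\item As that example shows, the numerator can be a constant and involve no variable at all.
\end{enumerate}
Moreover, ``the distinguished $b$'' presupposes selecting one element of the finite support $B_i=\{b\in B:\lambda_b(x)\neq 0\}$ simultaneously for every $i$. That is exactly the kind of choice you are not allowed to make. Two smaller points, neither needed for the argument: the set $S_i$ of Step~2 need not be finite (only the support of a single $x$ is), and the polynomials $p_{i,n}$ presuppose an already chosen finite subset of $X_i$.

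The missing idea is to divide by $x$ itself rather than by another coefficient. From $\lambda_b(y)=(y/x)\lambda_b(x)$ you get $\lambda_b(y)/y=\lambda_b(x)/x$. So the support $B_i$ (finite, and nonempty since $x\neq 0$) and the elements $r_{i,b}=\lambda_b(x)/x$ for $b\in B_i$ depend only on $i$. Each $r_{i,b}$ is $i$-homogeneous of degree $-1$. Hence it cannot lie in $\Q(X\setminus X_i)$, whose nonzero elements have $i$-degree $0$. Read the variables off a reduced representation $f/g$, which is unique up to a rational constant because each $\Q[Y]$ with $Y$ finite is a UFD. Then the set of variables from $X_i$ occurring in some $r_{i,b}$ with $b\in B_i$ is finite and nonempty. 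Taking the union over all of $B_i$ avoids choosing any particular $b$. This argument uses no Foundation, and since it runs over $\Q$ it only needs bases of vector spaces over a field of characteristic zero. Your derivation of the $\ZF$ equivalence via $\MC\to\AC$ and Zorn's lemma is fine.
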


The restriction to fields of characteristic zero is not stated in Blass' article, but his proof works under this weaker hypothesis. The
subsequent implication from $\MC$ to $\AC$ does require Foundation:
L\'evy's permutation-model construction establishes the relative
consistency of $\MC+\neg\AC$ with $\ZFm$~\cite{Levy1962}. Consequently,
Blass's argument left open whether the existence of bases implies
$\AC$ in $\ZFm$~\cites[p.~33]{Blass}{Karagila2017MO}[p.~423]{Morillon}[p.~121]{Philip2025}.

Is Foundation really necessary here? Given its usual role in
mathematics, an affirmative answer would be surprising.\footnote{The question
whether the existence of bases implies $\AC$ in $\ZFm$ is explicitly
recorded as open by Philip~\cite[\S7.2.2, p.~121]{Philip2025}.
Morillon~\cite[p.~423]{Morillon} explicitly records the corresponding
question in set theory with atoms.} Foundation
is generally regarded as dispensable for ordinary mathematics,
although it is useful in the metamathematics of set theory.\footnote{See
\cites[Chapter~6, p.~63]{Jech2003}[p.~47 and Chapter~III, \S4]{Kunen1980}[\S I.14]{Kunen2009}[p.~87]{FraenkelBarHillelLevy1973}.}
We show that it can indeed be omitted here. Retaining Blass's passage
to $\MC$, we use the hypothesis on bases again to construct
well-orderings directly, and thereby prove the following.

\begin{theorem}\label{thm:main}
In $\ZFm$, assume that every vector space over a field of characteristic
zero has a basis. Then $\AC$ holds.
Thus, in $\ZFm$, the following are equivalent:
\begin{enumerate}
    \item $\AC$;
    \item every vector space has a basis;
    \item every vector space over a field of characteristic zero has a basis.
\end{enumerate}
\end{theorem}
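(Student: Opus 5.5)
By Theorem~\ref{thm:blass}, the hypothesis already gives $\MC$ in $\ZFm$. So it suffices to prove that every set $X$ can be well-ordered. The implications (1)$\Rightarrow$(2)$\Rightarrow$(3) are standard: Zorn's lemma needs no Foundation. The content of the theorem is therefore (3)$\Rightarrow$(1).

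The obstruction to going from $\MC$ to $\AC$ without Foundation is that $\MC$ only yields nonempty finite subsets. In L\'evy-type models these subsets cannot be refined to single points, because of finite symmetries such as swapping indistinguishable atoms or Quine atoms. My plan is to break these finite symmetries with a second application of the basis hypothesis, using a field construction that converts "a finite set of candidates" into "a canonical element".

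The idea is as follows. Fix a set $X$. I would try to show $X$ is well-orderable by transfinite recursion: choose elements of the remainder at each stage. For that I need a choice function on $\Pow(X)\setminus\{\emptyset\}$, and it is enough to have one on the nonempty finite subsets, since $\MC$ already provides finite subsets. So the goal reduces to choosing from each nonempty finite $F\subseteq X$ canonically, given some extra structure obtained from a basis.

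For this I would build a field $K$ of characteristic zero attached to $X$. Take a purely transcendental extension $\Q(X)$, with the elements of $X$ as indeterminates. Then adjoin, for each finite $F$, the roots of a polynomial whose roots are in canonical bijection with the elements of $F$, for instance $\prod_{x\in F}(T-x)$ over the field of symmetric functions of $F$. This gives a normal extension $L/K$ on which the symmetric groups of the finite sets act as Galois groups.

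Now consider the $K$-vector space $L$, or a suitable quotient or subspace of it, such as a space built from embeddings $\Emb$ or from $\Gal$-orbits. Apply the basis hypothesis to it. A basis $B$ gives, for each finite $F$, a coordinate functional $\coeff_b$ with $b\in B$. The key lemma would be that some definable coefficient map separates the roots. From the basis one can define an element of the splitting field of $F$ that is not fixed by any nontrivial permutation of $F$. Then, using the chosen finite subsets from $\MC$ and the fixed basis, one can single out an element of $F$; for example, take the root on which a canonical linear functional attains a distinguished value. A possible alternative is to iterate $\MC$ together with the basis to reduce $|F|$ each time.

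The main obstacle I expect is this separation step. A basis is itself only a set, so it may be invariant under the very symmetries we want to break. Blass's original trick works because linear independence interacts with the field generated by the elements of $X$. Here I would imitate his argument: show that if the basis were invariant under a nontrivial permutation of $F$, then the coordinates of a canonical vector (such as $\sum_{x\in F}x$ or a Vandermonde-type element) would violate uniqueness of representation. This is where characteristic zero is needed, since $|F|$ must be invertible. Once that is done, the recursion well-orders $X$ and $\AC$ follows.
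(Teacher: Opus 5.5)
Your reduction is sound: $\MC$ from Blass, plus a choice function on nonempty finite subsets of $X$, would give $\AC$. But there is a genuine gap. The step you flag as the main obstacle is the entire content of the theorem, and the argument you sketch for it is false. A basis being invariant under a nontrivial permutation does not conflict with uniqueness of coordinates. If $K$ contains $t_x+t_y$ and $t_xt_y$ but not $t_x$, then $\{t_x,t_y\}$ is a $K$-basis of $K(t_x,t_y)$ that is stable under the swap. The vectors $t_x+t_y$ and $t_x-t_y$ just have coordinates $(1,1)$ and $(1,-1)$, which the swap permutes consistently. More generally, normal bases of finite Galois extensions are Galois-invariant. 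So a single basis need not distinguish the elements of any given finite block, and there is no reason to expect your separation lemma to hold.

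The field construction is also ill-posed as written. The roots of $\prod_{x\in F}(T-x)$ already lie in $\Q(X)$. If the ground field contains the symmetric functions of all finite subsets, singletons included, it is all of $L$. Nothing in your set-up gives a well-ordering of the ground field, which is what any canonical selection must be measured against. Characteristic zero enters through Blass's theorem and separability, not by making the size of a finite set invertible.

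The missing mechanism is the paper's, in four steps.
\begin{enumerate}
\item Use L\'evy's lemma (equivalent to $\MC$) to split $X$ into ordinal-indexed finite blocks $X_\alpha$. Let $K$ be generated over $\Q$ by the coefficients of the polynomials $\prod_{u\in X_\alpha}(y-t_u)$. Then $K$ is well-orderable and $L=\Q(t_x:x\in X)$ is normal, separable and algebraic over $K$.
\item Do not choose from blocks at all; instead extend the well-ordering transfinitely along intermediate fields $F$. Using $\MC$ once, fix for every well-orderable intermediate $F$ a finite nonempty family $\Ccal_F$ of $F$-bases of $L$. You need bases over every intermediate field, not just over $K$, and you cannot pick them one stage at a time without choice.
\item At stage $F$, adjoin the coefficients of $P_{\Ccal_F,E}$ for all $E\in\Ecal(L,F)$. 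These are indexed by polynomials in $F[x]$ and pairs of exponents, so the well-ordering extends canonically.
\item The real replacement for your separation lemma is Theorem~\ref{lem:finite-bases}. If nothing new is adjoined, $\Gal(E/F)$ permutes the sets $B\cap E$ for $B\in\Ccal_F$. Hence the coefficients of the $q_{B_*,E}$ generate a field $D$ with $[D:F]\le|\Ccal_F|$. The sums over root classes of the minimal polynomials over $D$ of elements of $B_*$ are $F$-independent elements of $D$, which forces $B_*$ to be finite.
\end{enumerate}
A Hartogs-number bound forces the chain to stabilize, and a finite basis over the final field well-orders $L$, hence $X$. Your remark about iterating points in this direction, but without such a finiteness criterion nothing guarantees the iteration makes progress.
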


Section~\ref{sec:structure} explains this well-ordering argument by
presenting its main ingredients in reverse order of dependence. The
subsequent sections supply their proofs, including the required
facts from Galois theory, in $\ZFm$.

\section{Structure of the proof}\label{sec:structure}

We work throughout this section in $\ZFm$.

To prove Theorem~\ref{thm:main}, it suffices to show that every set can
be well-ordered. We shall place an arbitrary set inside an algebraic
field extension and obtain its well-ordering from the following
theorem. This is the first reduction of the proof: it turns the
set-theoretic question into a question about extending a well-ordering
from a field to a larger field.

\begin{overviewtheorem}\label{thm:algebraic}
Assume $\MC$. Let $L/K$ be a normal separable algebraic extension.
Assume that $\prec$ is a well-ordering of $K$ and that, for every well-orderable
field $F$ with $K\subseteq F\subseteq L$, the $F$-vector space $L$ has a basis.
Then $L$ is well-orderable by a well-ordering in which $(K, \prec)$ is an initial segment.
\end{overviewtheorem}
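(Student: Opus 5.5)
We will build the well-ordering of $L$ by transfinite recursion, growing a chain of intermediate fields $K=F_0\subseteq F_1\subseteq\cdots$. Each $F_\alpha$ will be well-ordered, and each ordering will end-extend the previous ones. Suppose $F_\alpha$ has been built with a well-ordering $\prec_\alpha$ extending $\prec$. At a limit stage we take the union of the fields and the union of the orderings. The union of an increasing chain of end-extending well-orderings is again a well-ordering, since each order is an initial segment of the later ones. Since $L$ is a set, Hartogs' lemma bounds the length of the recursion. We stop as soon as $F_\alpha=L$. The recursion step must therefore produce, from a well-orderable $F$ with $K\subseteq F\subsetneq L$, a strictly larger field $F'\subseteq L$ together with a well-ordering of $F'$ having $F$ as an initial segment. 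This step has to be done uniformly, with no arbitrary choices, so that the recursion is definable.

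For the successor step we use the hypothesis. Let $F$ be well-orderable with $K\subseteq F\subseteq L$. Then $L$ has an $F$-basis; its existence is given by the hypothesis, but the basis itself is not canonical. The plan is to remove this non-canonicity with $\MC$ and Galois symmetry.

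Consider the set $\mathcal B$ of all $F$-bases of $L$, which is nonempty. Each element $x\in L$ has finite support in any basis $B$. The natural invariant is the finite set of elements of $L$ that occur as basis vectors in the supports relevant to a fixed finite-dimensional subextension.

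Concretely, pick the $\prec_F$-least element $a\in L\setminus F$ in a suitable canonical sense. Since $L\setminus F$ is not yet ordered, we proceed differently. For each $x\in L\setminus F$, let $N_x$ be the normal closure of $F(x)$ in $L$. This is finite over $F$ because $L/K$ is normal separable algebraic, hence so is $L/F$. $N_x$ is a finite-dimensional $F$-space, and the finite set $\Gal(N_x/F)$ acts on it. Apply $\MC$ to the family $\{\Bas(N_x/F)\}_x$, or better to the family $\{N_x\setminus F\}$. This gives, for every $x$, a nonempty finite subset $S_x\subseteq N_x$. Taking the $\Gal(N_x/F)$-orbit of $S_x$ together with $S_x$, we get a finite Galois-stable set that generates a finite normal extension of $F$. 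The key observation is that a finite-dimensional $F$-space spanned by a finite set can be well-ordered from a well-ordering of $F$ once a finite ordered basis is fixed. To avoid choosing an ordering of a finite set, we use the fact that finitely many orderings can be aggregated: the finite set of all orderings of $S_x$ is canonical. We then well-order the finite set of resulting well-orderings of $N_x$ lexicographically via the coordinates in $F$.

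We then set $F'=N_x$ for a canonically determined $x$. Alternatively, we adjoin all $N_x$ at once and take $F'$ to be their compositum, which is $L$ itself, ordered by a lexicographic comparison of the canonical data attached to each element.

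We expect the main obstacle to be making the order on $F'\setminus F$ genuinely canonical. Given an unordered finite set $S$ and a well-ordered field $F$, we must well-order $F(S)$ without ordering $S$. Our plan is to compare elements $y\in F(S)$ through the finite set $\{\text{coordinate vector of } y \text{ w.r.t. } \sigma\}$, where $\sigma$ ranges over the enumerations of a basis extracted canonically from $S$. This gives an injection of $F(S)$ into the set of finite subsets of $F^{<\omega}$, which is well-orderable from $\prec_F$.

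The extraction step needs a basis of $N_x$ over $F$ determined by $S$. Here we will use the hypothesis on bases once more, for the intermediate well-orderable fields. This is why the theorem assumes bases exist for every well-orderable $F$ and not only for $K$. We also need to check that the resulting injections for different $x$ cohere, which can be done by ordering $L\setminus F$ first by the well-orderable invariant of $N_x$ and then inside $N_x$.
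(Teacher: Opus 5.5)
The outer frame of your argument matches the paper: a chain of well-ordered intermediate fields, each ordering an end-extension of the earlier ones, unions at limits, and a Hartogs bound. The successor step, however, fails, and it carries all the difficulty.

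\textbf{The ordering of $F(S)$ is not canonical.} Your proposed injection $y\mapsto\{\text{coordinates of }y\text{ w.r.t. }\sigma:\sigma\text{ an enumeration of the basis}\}$ is not injective. For an unordered basis $\{b_1,b_2\}$, both $b_1$ and $b_2$ are sent to $\{(1,0),(0,1)\}$. This is not a technical slip. Once you replace $S_x$ by its $\Gal(N_x/F)$-orbit, every datum you use is invariant under $\Gal(N_x/F)$. An ordering defined uniformly from such data is then fixed by every $g\in\Gal(N_x/F)$. Since a well-ordering has no nontrivial automorphisms, no such construction can well-order $N_x$, or even separate an element from its conjugates.

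\textbf{There is no guarantee of progress.} The only new elements that can be ordered canonically are those named by a well-ordered index set, such as the coefficients of $\prod_{s\in S}(y-s)$. For a Galois-stable $S$ these lie in $F$. Even without taking orbits, nothing prevents $\MC$ from returning sets whose symmetric functions all lie in $F$. So you have no mechanism ensuring $F'\supsetneq F$, and the plan to continue until $F_\alpha=L$ cannot be carried out. Separately, applying $\MC$ anew at every successor stage contradicts your own requirement of a choice-free step. $\MC$ must be applied once, before the recursion, to a family indexed by all well-orderable intermediate fields.

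\textbf{What is missing.} You need to use the bases themselves, together with a finiteness criterion that replaces strict progress. The paper applies $\MC$ once to the sets $\Bas_F(L)$, obtaining finite nonempty $\Ccal_F\subseteq\Bas_F(L)$. For each $E\in\Ecal(L,F)$ it encodes $\Ccal_F$ symmetrically by $P_{\Ccal_F,E}(z,x)=\prod_{B\in\Ccal_F}(z-q_{B,E}(x))$, where $q_{B,E}(x)=\prod_{b\in B\cap E}(x-b)$. The coefficients are indexed by $\mathscr P_F\times\omega\times\omega$, which is well-ordered from $\prec$. Hence the field $F^+$ they generate is canonically well-ordered, through minimal describing formulas.

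The chain is allowed to stop growing below $L$. If $F^+=F$, then Galois automorphisms of each $E$ permute the sets $B\cap E$. This bounds by $|\Ccal_F|$ the degree over $F$ of the field generated by the coefficients of all $q_{B_*,E}$. Linear independence of the basis $B_*$ then forces $B_*$ to be finite (Theorem~\ref{lem:finite-bases}). At the stationary stage $[L:F]<\omega$, and a single choice of a finite ordered basis finishes the proof.

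Your proposal has no analogue of this criterion, and cannot have one for arbitrary finite subsets of $N_x\setminus F$. It is precisely the linear independence and spanning property of bases that turns ``the symmetric data lie in $F$'' into ``$L/F$ is finite''.
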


To apply this theorem, we need a suitable extension containing a copy
of the set we started with. The next theorem provides it using
$\MC$, which is already available from Blass's theorem.

\begin{overviewtheorem}\label{prop:embedding}
Assume $\MC$. For every set $X$ there exist a well-orderable field $K$
of characteristic zero, a normal separable algebraic extension $L/K$,
and an injection $X\to L$.
\end{overviewtheorem}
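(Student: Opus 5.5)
The plan is to use $\MC$ only once, to break $X$ into a well-ordered sequence of finite pieces. Each finite piece is then handled by the classical ``generic polynomial'' construction: the elements of a piece become the roots of a polynomial whose coefficients are elementary symmetric functions. Those coefficients are indexed by a well-orderable set, so they generate a well-orderable field.

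\textbf{Step 1 (decomposition).} By $\MC$ there is a function $f$ assigning to each nonempty $A\subseteq X$ a nonempty finite subset $f(A)\subseteq A$. By transfinite recursion put
\[
F_\alpha=f\Bigl(X\setminus\bigcup_{\beta<\alpha}F_\beta\Bigr),
\]
for as long as the argument is nonempty. Since the $F_\alpha$ are nonempty and pairwise disjoint, the recursion stops at some ordinal $\lambda$ (by Replacement/Hartogs). This gives $X=\bigsqcup_{\alpha<\lambda}F_\alpha$ with each $F_\alpha$ finite, say $|F_\alpha|=n_\alpha$. Only ordinals and recursion are used here, so Foundation plays no role.

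\textbf{Step 2 (the fields).} Let $L=\Q(s_y:y\in X)$ be the field of rational functions in indeterminates indexed by $X$. Concretely, polynomials are finitely supported functions from finitely supported exponent maps $X\to\omega$ into $\Q$, and $L$ is the fraction field; this is definable in $\ZFm$ without any choice. For $\alpha<\lambda$ and $1\le i\le n_\alpha$, let $e_{\alpha,i}\in L$ be the $i$-th elementary symmetric polynomial in $\{s_y:y\in F_\alpha\}$. This is well defined without ordering $F_\alpha$, because the sum runs over all $i$-element subsets. Put $K=\Q(e_{\alpha,i}:\alpha<\lambda,\ 1\le i\le n_\alpha)\subseteq L$. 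The injection $X\to L$ is $y\mapsto s_y$.

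\textbf{Step 3 (properties).}
\begin{itemize}
\item \emph{Normality and algebraicity.} Each $s_y$ with $y\in F_\alpha$ is a root of
\[
p_\alpha(T)=\prod_{y\in F_\alpha}(T-s_y)=\sum_i(-1)^i e_{\alpha,i}T^{n_\alpha-i}\in K[T].
\]
So $L$ is generated over $K$ by all roots of the family $(p_\alpha)_{\alpha<\lambda}$, and each $p_\alpha$ splits in $L$. Hence $L/K$ is algebraic and normal, being a splitting field of a family of polynomials. Since the roots of each $p_\alpha$ are distinct and the characteristic is zero, $L/K$ is also separable.
\item \emph{Well-orderability of $K$.} By the fundamental theorem on symmetric polynomials, for each finite set of indices $\alpha$ the $e_{\alpha,i}$ are algebraically independent over $\Q$. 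Algebraic independence is a finitary condition, so the whole family is algebraically independent. Hence $K\cong\Q(t_{\alpha,i})$ is a rational function field over the well-ordered index set $\{(\alpha,i)\}$. Such a field is well-orderable in $\ZF^-$: list finite-support exponent functions and coefficients via a definable well-order of finite sequences of ordinals and rationals, then order reduced fractions.
\end{itemize}

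\textbf{Main obstacle.} The only real obstacle I expect is checking that ``normal'' in the sense used in Theorem~\ref{thm:algebraic} follows from being a splitting field of a possibly infinite, well-indexed family. The usual proof of this equivalence picks embeddings into an algebraic closure, which must be avoided without $\AC$. I would handle it directly. Suppose $q\in K[T]$ is irreducible with a root $\theta\in L$. Then $\theta$ lies in $K(s_y:y\in F_{\alpha_1}\cup\dots\cup F_{\alpha_k})$, a finite splitting field over $K$, where the classical finite Galois theory needs no choice. So $q$ splits there, and hence in $L$.
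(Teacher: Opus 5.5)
Your proof is correct and follows the paper's construction. You partition $X$ into finite blocks (reproving the needed direction of Lemma~\ref{lem:partition} via the recursion $F_\alpha=f(X\setminus\bigcup_{\beta<\alpha}F_\beta)$ instead of citing it), let $L=\Q(s_y:y\in X)$, generate $K$ by the coefficients of the block polynomials, and check normality inside the finite splitting fields of products $p_{\alpha_1}\cdots p_{\alpha_k}$.

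The one difference is how you well-order $K$. The paper uses Lemma~\ref{lem:orderExtension} (least description by a formula with parameters), which needs no algebraic independence. Your appeal to the fundamental theorem on symmetric polynomials is correct but dispensable: any field generated over $\Q$ by a well-indexed family $(e_d)_{d\in D}$ is well-orderable by sending each element to the least pair $(P,Q)$ of polynomials in $\Q[t_d:d\in D]$ with $Q(\vec e)\neq 0$ and $P(\vec e)/Q(\vec e)$ equal to it.
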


The construction uses L\'evy's partition lemma to write $X$ as an
ordinal-indexed union of disjoint finite sets. Replacing the elements
of $X$ by indeterminates gives a rational function field $L$. For each
finite set in the partition, we form the polynomial whose roots are its
indeterminates, and let $K$ be generated by the coefficients of these
polynomials. The coefficients have a well-ordered indexing, which
makes $K$ well-orderable, while the polynomials make $L/K$ algebraic,
normal and separable. The details are given in
Section~\ref{sec:main-proof}.

\begin{proof}[Proof of Theorem~\ref{thm:main}]
By Theorem~\ref{thm:blass}, $\MC$ holds. Let $X$ be an arbitrary set.
By Theorem~\ref{prop:embedding}, there exist a well-orderable
field $K$ of characteristic zero, a normal separable algebraic
extension $L/K$, and an injection $X\to L$.
Fix a well-ordering of $K$. Every intermediate field $F$ containing
$K$ has characteristic zero, so the hypothesis of
Theorem~\ref{thm:main} gives a basis of the $F$-vector space $L$.
Theorem~\ref{thm:algebraic} therefore gives a well-ordering of $L$,
which induces a well-ordering of $X$. Since $X$ was arbitrary,
$\AC$ follows.
\end{proof}

We now work backward from the conclusion of
Theorem~\ref{thm:algebraic}. If we can reach a well-ordered
intermediate field $F$ such that $L/F$ is finite, then a finite basis
gives a well-ordering of $L$ through its coordinates over $F$.
It is therefore enough to have a procedure that extends the current
well-ordered field, with the guarantee that whenever the procedure
adds nothing, the remaining extension is finite. The following theorem
provides precisely this step.

\begin{samepage}
\begin{overviewtheorem}\label{lem:adjoin}
Let $L/F$ be a normal separable algebraic extension, let $\prec$ be a
well-ordering of $F$, and let $\Ccal$ be a finite nonempty collection
of $F$-bases of $L$. From these data one can define an intermediate
field $F^+$, with $F\subseteq F^+\subseteq L$, and a well-ordering
$\prec^+$ of $F^+$ such that $(F,\prec)$ is an initial segment of
$(F^+,\prec^+)$ and
\[
 F^+=F\quad\Longrightarrow\quad [L:F]<\omega.
\]
\end{overviewtheorem}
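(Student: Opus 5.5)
The plan is to build $F^+$ by adjoining to $F$ elements of $L$ that are \emph{canonically} determined by $(F,\prec,\Ccal)$. No arbitrary choices can be made here: the lemma is choice-free and $\MC$ is not assumed. If a single canonical element $z\in L$ is available, then $F(z)/F$ is finite because $L/F$ is algebraic. Moreover $1,z,\dots,z^{d-1}$, with $d=[F(z):F]$, is a canonical $F$-basis of $F(z)$. So $F(z)$ can be well-ordered by putting $F$ first and then ordering the remaining elements lexicographically by their coordinate vectors in $F^d$ under $\prec$. This makes $(F,\prec)$ an initial segment.

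To get many canonical elements, index them by a well-orderable set. Since $F$ is well-ordered, the set of monic polynomials $p\in F[t]$ is well-orderable, canonically from $\prec$. For each such $p$ and each basis $B\in\Ccal$, every root $r\in L$ of $p$ has a finite support $S_B(r)\subseteq B$. Put $w_B(r)=\sum_{b\in S_B(r)} b$ and
\[
z_p=\sum_{B\in\Ccal}\ \sum_{r\in L,\ p(r)=0} w_B(r).
\]
This is a finite sum over the finite set $\Ccal$ and the finitely many roots, so no ordering of $\Ccal$ or of the roots is needed. I would then define $F^+$ by transfinite recursion along the well-ordered list of polynomials. At each stage the current field $F_\alpha$ carries a well-ordering extending $\prec$. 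We adjoin $z_p$, or the next $z_p$ not already in $F_\alpha$, and extend the ordering by the lexicographic recipe above, using the powers of $z_p$ over $F_\alpha$. At limit stages we take unions of fields and orderings. Since $L$ is a set, the recursion stabilizes, and its limit is $(F^+,\prec^+)$. Each step puts the old field as an initial segment, so $(F,\prec)$ is an initial segment of $(F^+,\prec^+)$. The recursion may be replaced by adjoining only the least $z_p\notin F$, if that simplifies the bookkeeping; the implication to prove only concerns the case where nothing is added.

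The main obstacle is the implication $F^+=F\Rightarrow[L:F]<\omega$. Here we know that every $z_p$ lies in $F$. Hence every $\sigma\in\Gal(L/F)$ fixes every $z_p$, and the goal is to show that $\Gal(L/F)$ is finite. Via the Galois correspondence, which the paper promises to establish in $\ZFm$, this gives a finite extension. My first attempt runs as follows, arguing by contradiction. If $L/F$ is infinite, the bases in $\Ccal$ are infinite, and the supports of conjugates $\sigma(r)$ spread over infinitely many basis vectors. Compare $w_B(\sigma r)$ with $\sigma(w_B(r))$ and use linear independence of $B$ to show that some $z_p$ cannot be $\Gal$-invariant.

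I expect this to fail with the naive $w_B$, because $\sigma$ need not map $B$ to $B$. The choice of canonical elements will probably have to be refined. Possible refinements are to weight the sum by coordinates, or to use structure constants of multiplication in $B$. Another is to consider the finite $F$-spaces spanned by the supports of all conjugates of a root and extract fixed-field generators from them. The aim is an invariant whose $\Gal$-fixedness forces a finite bound on the degrees of all roots, and hence finiteness, since $L/F$ is separable and normal. Finding the right canonical invariant for this step is where I expect the real work to lie.
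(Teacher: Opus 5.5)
\textbf{Genuine gap: the implication $F^+=F\Rightarrow[L:F]<\omega$ is not proved, and your invariant cannot prove it.}

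Your well-ordering half is sound. Adjoining canonically indexed elements one at a time and ordering each $F_\alpha(z)$ by coordinates in the power basis is a legitimate, and simpler, substitute for the paper's ordering by least defining formulas.

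The real content of the statement is the implication $F^+=F\Rightarrow[L:F]<\omega$, which you leave open. Your candidate invariant fails for a more basic reason than the one you anticipate: summing over $\Ccal$ lets different bases cancel. Take $\Ccal=\{B,-B\}$. Then $S_{-B}(r)=\{-b:b\in S_B(r)\}$, so $w_{-B}(r)=-w_B(r)$ and $z_p=0\in F$ for every $p$. This happens even when $L/F$ is infinite, for instance $F=\Q$, $L=\Q(\sqrt2,\sqrt3,\sqrt5,\dots)$, $B=\{\sqrt n: n\ \text{squarefree}\}$. In the application $\Ccal$ is whatever $\MC$ produces, so the construction must work for every finite nonempty $\Ccal$.

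Your intended route through $\Gal(L/F)$ is also unavailable in $\ZFm$. Without choice, an automorphism of a finite normal subextension need not extend to $L$. So bounds on $\Gal(L/F)$ do not pass down to the finite groups $\Gal(E/F)$, and the paper develops only finite Galois theory.

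\textbf{The missing idea.} Work inside the finite normal subextensions $E\in\Ecal(L,F)$, which every $\sigma\in\Gal(E/F)$ maps onto itself. Symmetrize over $\Ccal$ by a polynomial in an auxiliary variable rather than by a sum. Concretely, the paper adjoins all coefficients of
\[
P_{\Ccal,E}(z,x)=\prod_{B\in\Ccal}\bigl(z-q_{B,E}(x)\bigr),\qquad q_{B,E}(x)=\prod_{b\in B\cap E}(x-b).
\]
These coefficients are indexed by the well-orderable set $\mathscr P_F\times\omega\times\omega$ via $E=E_p$.

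\textbf{How this gives finiteness (Theorem~\ref{lem:finite-bases}).} Suppose all these coefficients lie in $F$, and fix $B_*\in\Ccal$ and $\sigma\in\Gal(E/F)$. Then $\sigma(q_{B_*,E})$ is again a root in $z$ of $P_{\Ccal,E}$, so $\sigma[B_*\cap E]=B'\cap E$ for some $B'\in\Ccal$. Hence the coefficient tuple of $q_{B_*,E}$ has at most $m=|\Ccal|$ conjugates. Applying Lemma~\ref{lem:orbit} in finite composita gives $[D:F]\le m$ for $D=F\bigl(\bigcup_E\coeff(q_{B_*,E})\bigr)$. Every $q_{B_*,E}$ lies in $D[x]$, so $B_*$ splits into complete root sets $C$ of minimal polynomials over $D$. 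The sums $\sum_{b\in C}b$ lie in $D$ and are $F$-linearly independent, because the $C$ are disjoint subsets of a basis. So there are at most $m$ classes, each finite, and $B_*$ is finite.
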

\end{samepage}

Here the extension of the well-ordering is determined by the given
data. To prove Theorem~\ref{thm:algebraic} using this construction, we first apply
$\MC$ to the sets of bases over all well-orderable intermediate
fields, obtaining a finite nonempty family $\Ccal_F$ for each such
$F$. With these families fixed, we start at $K$ and repeatedly apply
the theorem. At limit stages we take unions. The requirement that each
order extend the previous one as an initial segment ensures that
these unions are still well-ordered.

The process must reach a stage at which the field no longer grows.
Indeed, let $\kappa=h(\Pow(L))$ be the Hartogs number of $\Pow(L)$,
the least ordinal that does not inject into the set of subsets of
$L$ (see \cite[Lemma~3.4, p.~29]{Jech2003}). A strictly increasing chain of intermediate fields of length
$\kappa$ would contradict its definition. At a stationary stage,
Theorem~\ref{lem:adjoin} gives a finite remaining extension, so the
finite-basis argument completes the well-ordering of $L$. This is
the proof of Theorem~\ref{thm:algebraic} from
Theorem~\ref{lem:adjoin}. Its formal details appear in
Section~\ref{sec:extension}.

It remains to explain the enlargement procedure in
Theorem~\ref{lem:adjoin}. Its definition is guided by a criterion for
the finiteness of $L/F$. To state that criterion, let
$\Ecal(L,F)$ denote the family of intermediate fields $E$ for which
$E/F$ is finite, normal and separable. If $B$ is an $F$-basis of $L$
and $E\in\Ecal(L,F)$, then $B\cap E$ is finite, since it is linearly
independent in the finite-dimensional $F$-vector space $E$. Put
\[
 q_{B,E}(x)=\prod_{b\in B\cap E}(x-b)\in E[x],
 \qquad
 P_{\Ccal,E}(z,x)=\prod_{B\in\Ccal}
                 \bigl(z-q_{B,E}(x)\bigr)\in E[z,x],
\]
where $\Ccal$ is a finite nonempty collection of $F$-bases of $L$ and
an empty product is $1$. The first polynomial records the finite
intersection with one basis. The second records the collection of
these polynomials without selecting a basis from $\Ccal$. Here $z$
is a second indeterminate, independent of $x$.

\begin{overviewtheorem}\label{lem:finite-bases}
Let $L/F$ be a normal separable algebraic extension and let $\Ccal$
be a finite nonempty family of $F$-bases of $L$. Suppose that, for every $E\in\Ecal(L,F)$,
\begin{equation*}
 P_{\Ccal,E}(z,x)\in F[z,x].
\end{equation*}
Then $L/F$ is finite.
\end{overviewtheorem}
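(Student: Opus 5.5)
The plan is to turn the hypothesis $P_{\Ccal,E}\in F[z,x]$ into a uniform bound on how far the Galois group of each $E\in\Ecal(L,F)$ can move the finite sets $B\cap E$, and then turn that bound into a uniform bound on degrees of elements of $L$ over $F$. Put $n=|\Ccal|$. Fix $E\in\Ecal(L,F)$ and let $G=\Gal(E/F)$, which is finite. Each $\sigma\in G$ acts on $E[z,x]$ coefficientwise and fixes $P_{\Ccal,E}$. The factors $z-q_{B,E}(x)$ are monic of degree one in $z$, hence irreducible in the UFD $E[z,x]$. So $\sigma$ permutes the multiset $\{q_{B,E}:B\in\Ccal\}$, and each $q_{B,E}$ has a $G$-orbit of size at most $n$. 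Let $H_B\le G$ be the stabilizer of $q_{B,E}$, so $[G:H_B]\le n$. Put $H_E=\bigcap_{B\in\Ccal}H_B$, so $[G:H_E]\le n^n=:N$. Since $B\cap E$ is exactly the root set of the separable polynomial $q_{B,E}$, every $\sigma\in H_E$ permutes $B\cap E$ for every $B\in\Ccal$. The fixed field $E_0=E^{H_E}$ satisfies $[E_0:F]\le N$, and $N$ does not depend on $E$. This step uses only finite Galois theory, which is available in $\ZFm$ since no choice is involved for finite extensions.

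Next I would fix a single $B\in\Ccal$ and exploit linear independence. Elements of $B\cap E$ in distinct $H_E$-orbits have disjoint supports, so their orbit sums are $F$-linearly independent. These sums are $H_E$-invariant and therefore lie in $E_0$. Hence $B\cap E$ splits into at most $N$ orbits under $H_E$.

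The goal is to show that every $a\in L$ has $[F(a):F]\le M$ for some $M$ depending only on $N$. Once that is done, $L/F$ is finite. Indeed, choose $a$ of maximal degree. For any $c\in L$, the primitive element theorem applied to the finite separable extension $F(a,c)/F$ gives an element of degree $\ge[F(a,c):F]$. Maximality then forces $c\in F(a)$, so $L=F(a)$.

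To bound degrees, I would take $a\in L$ and let $E$ be the normal closure of $F(a)$ together with the finitely many elements of the support of $a$ in $B$. Then $a=\sum_{b\in B\cap E}c_b b$ with $c_b\in F$. I would try to show that the $G$-orbit of each $b\in B$ has size at most some $M$. The $H_E$-orbit of $b$ has size at least $[F(b):F]/N$ and lies inside $B\cap E$. I would compare this with a larger normal $E'\supseteq E$, using that $q_{B,E}$ divides $q_{B,E'}$ and that $H_{E'}$ restricts into $\Gal(E/F)$. The aim is to force these orbits to be small: show that $H_E$ acting on $B\cap E$ must fix the elements of $B$ of large degree, and then use the $\le N$ bound on fixed linearly independent vectors in $E_0$.

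The main obstacle is this last step, passing from "$H_E$ permutes $B\cap E$ with at most $N$ orbits" to "every element of $B$ has bounded degree". Permutation alone does not prevent large orbits, since normal bases show that conjugates can be linearly independent. My first attempt would be to play the conditions for nested $E\subseteq E'$ against each other, and to use all $B\in\Ccal$ simultaneously, to pin down $H_E$-invariance of individual basis elements. Bounded degree of the elements of $B$ then bounds the degree of every $a$ via its finite support, and the primitive element argument above finishes the proof.
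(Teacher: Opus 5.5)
\textbf{There is a genuine gap.} Your first two steps are correct. The stabilizer of $q_{B,E}$ in $\Gal(E/F)$ has index at most $n$, its fixed field has degree at most $n$ over $F$, and the orbit sums show that $B\cap E$ has at most that many orbits. One basis suffices here, so the intersection over $\Ccal$ and the bound $n^n$ are unnecessary.

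The trouble is that these bounds are per-$E$. They only give $|B\cap E|\le N|H_E|$, which is no better than the trivial bound $|B\cap E|\le[E:F]$. You leave the decisive step open, and the target you set for it is false. There is no bound on $[F(a):F]$ depending only on $N$, and $H_E$ need not fix any element of $B$. Take $F=\Q$, $L=\Q(\zeta_p)$ and $\Ccal=\{B\}$ with $B=\{\zeta_p,\dots,\zeta_p^{p-1}\}$, which is a $\Q$-basis.
\begin{itemize}
\item Every element of $B$ generates $L$, so $q_{B,E}=1$ for $E\neq L$.
\item For $E=L$ we get $q_{B,L}=\Phi_p\in\Q[x]$.
\end{itemize}
So the hypothesis holds with $n=N=1$. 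Yet $[L:F]=p-1$ is arbitrarily large, and $H_L=\Gal(L/F)$ acts transitively on $B$. What has to be proved is that $B$ is finite, not that its size or degrees are bounded in terms of $|\Ccal|$.

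\textbf{The missing idea} is to make the fixed field independent of $E$.
\begin{itemize}
\item Fix one $B\in\Ccal$ and let $E\subseteq E'$ be in $\Ecal(L,F)$, with $H, H'$ the stabilizers of $B\cap E$ and $B\cap E'$. Any $\sigma\in H'$ preserves $E$ by normality, hence preserves $B\cap E=(B\cap E')\cap E$. So $\sigma|_E\in H$, and therefore $E^{H}\subseteq E'^{H'}$.
\item Since $\Ecal(L,F)$ is closed under finite composita, these fixed fields form a directed family. Their union $D$ is a field in which any finitely many elements lie in a single fixed field, so $[D:F]\le n$.
\item This $D$ is exactly the field generated over $F$ by all coefficients of all $q_{B,E}$. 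That is the field the paper uses; it bounds $[D:F]$ by counting orbits of the coefficient tuple over a compositum.
\end{itemize}
Now work over $D$ for all of $B$ at once.
\begin{itemize}
\item For $b\in B$, its minimal polynomial $\mu_b$ over $D$ divides $q_{B,E}\in D[x]$ for any $E\ni b$. So all roots of $\mu_b$ lie in $B$.
\item These root sets partition $B$ into finite classes. Their sums lie in $D$ and are $F$-linearly independent because the classes are disjoint subsets of a basis.
\item Hence there are at most $n$ classes, and $B$ is finite.
\end{itemize}
With this in place, the maximal-degree and primitive element argument is unnecessary.
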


This criterion suggests which elements should be adjoined in
Theorem~\ref{lem:adjoin}. Writing $\coeff(P)$ for the finite set of
coefficients of a polynomial $P$, we put
\[
 F^+=F\left(\bigcup_{E\in\Ecal(L,F)}\coeff(P_{\Ccal,E})\right).
\]
If $F^+=F$, all the polynomials $P_{\Ccal,E}$ have their coefficients
in $F$, and Theorem~\ref{lem:finite-bases} gives $[L:F]<\omega$.
To complete the enlargement step, we must also extend the given
well-ordering to $F^+$. Each $E\in\Ecal(L,F)$ is generated by the
roots of a polynomial over $F$. We can therefore index the
coefficients used above by polynomials in $F[x]$ and by pairs of
natural numbers recording the powers of $z$ and $x$. Since $F$ is
well-ordered, this is a well-ordered index set. Ordering the formal
expressions in these generators then gives the required extension
of the order, as shown in Section~\ref{sec:extension}.

The proof of Theorem~\ref{lem:finite-bases} is where Galois theory
enters. If $P_{\Ccal,E}$ has coefficients in $F$, every automorphism
of $E$ over $F$ sends a set $B\cap E$, with $B\in\Ccal$, to another
set $B'\cap E$ with $B'\in\Ccal$. This bounds the number of possible
conjugates of the coefficients of the polynomials $q_{B,E}$.
For one fixed basis in $\Ccal$, the resulting degree bound, together
with linear independence, forces that basis to be finite.
Section~\ref{sec:finite} proves this criterion using the elementary
Galois-theoretic facts established in Section~\ref{sec:galois}.

\section{Set-theoretic preliminaries}\label{sec:preliminaries}

For undefined set-theoretic notions, we refer the reader to \cite{Kunen2009} and \cite{Kunen2011}.

The Axiom of Multiple Choice $\MC$ is the statement that
for every family of nonempty sets $(A_i)_{i\in I}$ there exists
a function $f$ whose domain is $I$ such that for every $i\in I$,
$f(i)$ is a nonempty finite subset of $A_i$.

For a set $Y$, let $h(Y)$ denote the Hartogs number
of $Y$, that is, the least
ordinal that does not inject into $Y$. Hartogs's theorem says that such a
number exists (see, e.g., \cite[\S I.11]{Kunen2009}).

We shall use that a finite union of finite sets is finite and that
choice functions for finite collections of nonempty sets exist in $\ZFm$.

The following is known as L\'evy's lemma \cite{Levy1962}.
See also
\cite[Lemma~1, p.~105]{Tachtsis2019}.

\begin{lemma}[L\'evy]\label{lem:partition}
In $\ZFm$, $\MC$ is equivalent to the statement that
every set has a partition into finite nonempty sets
indexed by an ordinal.

That is, for every set $X$ there exists an ordinal $\alpha$ and a family $(X_\beta)_{\beta<\alpha}$ of nonempty finite subsets of $X$ such that for every two distinct $\beta,\gamma<\alpha$, $X_\beta\cap X_\gamma=\varnothing$ and $\bigcup_{\beta<\alpha}X_\beta=X$.
\end{lemma}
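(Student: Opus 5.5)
We prove both directions of Lévy's lemma, using only pairing, union, power set, replacement, separation and ordinal recursion, so Foundation is never invoked.

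\emph{From partitions to $\MC$.} Let $(A_i)_{i\in I}$ be a family of nonempty sets and put $X=\{(i,a): i\in I,\ a\in A_i\}$. The hypothesis gives a partition $(X_\beta)_{\beta<\alpha}$ of $X$ into nonempty finite sets. For each $i$, the set of $\beta<\alpha$ with $X_\beta\cap(\{i\}\times A_i)\neq\varnothing$ is nonempty, since $A_i\ne\varnothing$. Let $\beta_i$ be its least element and set
\[
f(i)=\{a:(i,a)\in X_{\beta_i}\}.
\]
This is a nonempty finite subset of $A_i$, and $f$ is defined without any choices, so $\MC$ holds.

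\emph{From $\MC$ to partitions.} Fix a set $X$. Apply $\MC$ to the family $(S)_{S\in\Pow(X)\setminus\{\varnothing\}}$ to get a function $g$ with $g(S)$ a nonempty finite subset of $S$ for each nonempty $S\subseteq X$. Define by transfinite recursion
\[
X_\beta=g\Bigl(X\setminus\bigcup_{\gamma<\beta}X_\gamma\Bigr)
\]
as long as the remainder is nonempty. The sets produced are nonempty, finite and pairwise disjoint, because each one is taken from what is left over. So $\beta\mapsto X_\beta$ is injective into $\Pow(X)$, and the recursion must stop before $h(\Pow(X))$. It stops at the first $\alpha$ where the remainder is empty, which means $\bigcup_{\beta<\alpha}X_\beta=X$.

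To make this rigorous, we formalize the recursion as building the unique function on ordinals below $h(\Pow(X))$ that halts at the first $\alpha$ with empty remainder. The only step needing care is confirming that nothing in this argument uses Foundation. Hartogs's theorem and transfinite recursion on ordinals do not depend on regularity, and these are the only tools used, so the argument goes through in $\ZFm$.
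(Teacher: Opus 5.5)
Your proof is correct in both directions, and nothing in it uses Foundation: the only tools are one application of $\MC$ to $(S)_{S\in\Pow(X)\setminus\{\varnothing\}}$, transfinite recursion, and the Hartogs bound, all of which are available in $\ZFm$. The paper does not prove this lemma itself; it only cites L\'evy and Tachtsis, and your peeling-off construction is the standard argument for this equivalence.
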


\section{Elementary Galois theory}\label{sec:galois}

In this section, we recall basic known facts on Galois theory for three purposes: to make this paper self-contained, as they will be used in the upcoming sections; to guarantee that the proofs of these facts do not require the Axiom of Choice; and to establish our notation.
For undefined notions in field theory, we refer to \cite{Rotman1998}.

We work solely in $\ZFm$ in this section.
\vspace{1em}

By $E/F$ we denote the fact that $E$ is a field extension of $F$.

If $E/F$ is a field extension and $A\subseteq E$, then $F(A)$ denotes the smallest subfield of $E$ containing both $F$ and $A$.
Similarly, if $\vec a=(a_1,\ldots,a_r)$ is a finite tuple of elements of $E$, then $F(\vec a)=F(\{a_1,\ldots,a_r\})$.
An extension $E/F$ is simple if $E=F(a)$ for some $a\in E$.
An extension $E/F$ is finitely generated if $E=F(\vec a)$ for some finite tuple $\vec a$ of elements of $E$.

As usual, we say that $E/F$ is a finite extension if $E$ is a finite-dimensional vector space over $F$.
In this case, the degree of a finite extension $E/F$ is the dimension of $E$ as an $F$-vector space, denoted $[E:F]$.
Recall, from basic linear algebra, that the Axiom of Choice is not needed to define dimension in finite-dimensional vector spaces.

If $E/F$ and $F/K$ are field extensions, assume that $\mathcal B$ is a basis of $E$ over $F$ and $\mathcal C$ is a basis of $F$ over $K$. Then the set $\{bc:b\in\mathcal B, c\in\mathcal C\}$ is a basis of $E$ over $K$ and the displayed elements are all distinct.
In particular, if $E/F$ and $F/K$ are finite extensions, then $E/K$ is a finite extension and $[E:K]=[E:F][F:K]$.
This last equality is known as the tower formula.

Let $E/F$ be a field extension and let $a \in E$.
We say that $a$ is \emph{algebraic} over $F$ if there exists a nonzero polynomial $p \in F[x]$ such that $p(a) = 0$.
In that case, the minimal polynomial of $a$ over $F$ is the unique monic irreducible polynomial $p \in F[x]$ such that $p(a) = 0$.
From basic field theory, it follows that $[F(a):F]=d$, where $d$ is the degree of $p$, and that $\{a^i:i<d\}$ is a basis of $F(a)$ over $F$.
In particular, every element of $F(a)$ can be written uniquely as $\sum_{i<d}c_i a^i$ for some $(c_i:i<d)\in F^d$.

A field extension $E/F$ is \emph{algebraic} if every element of $E$ is algebraic over $F$.
Every finite extension is algebraic.
By induction and the tower formula, if $ E=F(a_1,\ldots,a_n)
$ where  each $a_i$ is algebraic over $F$,
then $E/F$ is a finite extension, and hence algebraic.

Recall that an algebraic extension $E/F$ is:
\begin{itemize}
       \item \emph{normal} if every irreducible polynomial over
$F$ that has a root in $E$ splits over $E$;
       \item \emph{separable} if the minimal polynomial of every element of $E$ over $F$ has distinct roots.
\end{itemize}

Given a field $F$, we denote the ring of polynomials of $F$ in the variable $x$ by $F[x]$.

For a field extension $E/F$, a field $K$, and a field embedding $\tau:F\to K$, we use the notation
\[
 \Emb_{\tau}(E,K)
 =\{\sigma:E\hookrightarrow K:\sigma\text{ is a field embedding and }
       \sigma|_F=\tau\}.
\]
In case $\tau$ is the inclusion map from $F$ to $K$, we write $\Emb_F(E,K)$.
We call its elements $F$-embeddings of $E$ into $K$.

The \emph{Galois group} of $E/F$ is $\Gal(E/F)=\{\sigma\in\Emb_F(E,E):\sigma\text{ is an automorphism}\}$, which is a group under composition.
If $E/F$ is a finite extension, then $\Emb_F(E,E)=\Gal(E/F)$, since every injective $F$-linear map $E\to E$ is surjective.

For a field embedding $\tau:K\to E$ and a polynomial
$p=\sum_{i=0}^n a_i x^i\in K[x]$, write $\tau(p)=\sum_{i=0}^n\tau(a_i)x^i\in E[x]$.
This induces an extension of $\tau$ to a ring homomorphism $\tau:K[x]\to E[x]$.

\begin{lemma}\label{lem:simple-embeddings}
Let $H=K(a)$ be a simple algebraic extension, let $E$ be a field,
and let $\tau:K\to E$ be a field embedding.
If $p\in K[x]$ is the monic minimal polynomial of $a$ over $K$,
then the map from $\Emb_{\tau}(H,E)$ into $\{b\in E:\tau(p)(b)=0\}$ defined by
\[
 \sigma\longmapsto\sigma(a),
\]
is a bijection.
\end{lemma}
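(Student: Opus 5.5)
The plan is the standard argument: check that the map lands in the stated root set, then prove injectivity and surjectivity directly. At no point is a choice made, so everything goes through in $\ZFm$. Throughout we use that $\{a^i:i<d\}$ is a $K$-basis of $H=K(a)$, where $d=\deg p$. Hence every element of $H$ is uniquely of the form $\sum_{i<d}c_ia^i$ with $c_i\in K$.

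\emph{Well-definedness.} Let $\sigma\in\Emb_\tau(H,E)$. Since $\sigma$ is a ring homomorphism extending $\tau$, applying it to $p(a)=0$ gives
\[
0=\sigma(p(a))=\tau(p)(\sigma(a)).
\]
So $\sigma(a)$ is a root of $\tau(p)$ in $E$.

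\emph{Injectivity.} Suppose $\sigma,\sigma'\in\Emb_\tau(H,E)$ satisfy $\sigma(a)=\sigma'(a)$. Both maps agree with $\tau$ on $K$ and both are ring homomorphisms. Therefore each sends $\sum_{i<d}c_ia^i$ to $\sum_{i<d}\tau(c_i)\sigma(a)^i$, so $\sigma=\sigma'$.

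\emph{Surjectivity.} This is the only step needing care. Fix $b\in E$ with $\tau(p)(b)=0$, and define
\[
\sigma\Bigl(\sum_{i<d}c_ia^i\Bigr)=\sum_{i<d}\tau(c_i)b^i.
\]
This is well defined by uniqueness of coordinates, it is additive, and it restricts to $\tau$ on $K$ with $\sigma(a)=b$.

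For multiplicativity I will factor through the polynomial ring. Consider the evaluation homomorphisms $\mathrm{ev}_a:K[x]\to H$, with $x\mapsto a$, and $\mathrm{ev}_b\circ\tau:K[x]\to E$, with $x\mapsto b$.
\begin{itemize}
\item The kernel of $\mathrm{ev}_a$ is $(p)$, since $p$ is the minimal polynomial of $a$. It is surjective because $H=K[a]$ when $a$ is algebraic, as the basis statement already shows.
\item The kernel of $\mathrm{ev}_b\circ\tau$ contains $p$, because $\tau(p)(b)=0$.
\end{itemize}
Hence $\mathrm{ev}_b\circ\tau$ factors through $K[x]/(p)\cong H$, giving a ring homomorphism $H\to E$. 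On the basis representatives, which are the reductions modulo $p$, this homomorphism is exactly $\sigma$, so $\sigma$ is multiplicative. Finally, $\sigma$ is a ring homomorphism out of a field with $\sigma(1)=1$, so it is injective and therefore a field embedding.

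The main obstacle is only verifying that this construction is choice-free. It is: both the division algorithm and the uniqueness of coordinates are explicit, so no choices are involved.
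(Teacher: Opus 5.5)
Your proof is correct and takes essentially the same route as the paper. Well-definedness comes from applying $\sigma$ to $p(a)=0$. Surjectivity comes from factoring $q\mapsto\tau(q)(b)$ through $K[x]/(p)\cong H$. Injectivity rests on an embedding extending $\tau$ being determined by its value at $a$, which is exactly how the paper checks $g(f(\sigma))=\sigma$.

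The only cosmetic difference is how the induced map $H\to E$ is shown to be injective. You use $\sigma(1)=1$ together with $H$ being a field, whereas the paper shows $\ker g_b=(p)$ using maximality of $(p)$.
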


\begin{proof}
If $\sigma\in\Emb_{\tau}(H,E)$, write
$p=\sum_{i=0}^n c_i x^i$. Since $\sigma|_K=\tau$, we have
$
\tau(p)(\sigma(a))
=\sum_{i=0}^n\tau(c_i)\sigma(a)^i
=\sum_{i=0}^n\sigma(c_i)\sigma(a)^i
=\sigma\left(\sum_{i=0}^n c_i a^i\right)
=\sigma(p(a))=0.
$
Thus, the displayed map is well-defined.
Write $f(\sigma)=\sigma(a)$, so $f:\Emb_{\tau}(H,E)\to\{b\in E:\tau(p)(b)=0\}$ is a function.

Let $\mathrm{ev}_a:K[x]\to H$ be the evaluation map $q\mapsto q(a)$.
It is a surjective ring homomorphism with kernel $(p)$, so it induces
a unique isomorphism
$\overline{\mathrm{ev}}_a:K[x]/(p)\to H$ such that
$\overline{\mathrm{ev}}_a\circ\pi=\mathrm{ev}_a$, where
$\pi:K[x]\to K[x]/(p)$ is the canonical projection.

We will use $\overline{\mathrm{ev}}_a$ below to define an inverse mapping of $f$.

Fix a root $b\in E$ of $\tau(p)$ and define the ring homomorphism
$g_b:K[x]\to E$ by $g_b(q)=\tau(q)(b)$.
Since $b$ is a root of $\tau(p)$, we have $(p)\subseteq\ker g_b$.
As $p$ is irreducible, $(p)$ is maximal; moreover, $\ker g_b$ is
proper because $g_b(1)=1\neq 0$. Hence $\ker g_b=(p)$, and there is
a unique embedding $\bar g_b:K[x]/(p)\to E$ such that
$\bar g_b\circ\pi=g_b$. Define
\[
 \sigma_b=\bar g_b\circ\overline{\mathrm{ev}}_a^{-1}:H\to E.
\]
Since $\bar g_b$ is an embedding and $\overline{\mathrm{ev}}_a^{-1}$ is
an isomorphism, $\sigma_b$ is an embedding. Moreover, we have
$\sigma_b\circ\mathrm{ev}_a=\bar g_b\circ\overline{\mathrm{ev}}_a^{-1}
\circ\mathrm{ev}_a=\bar g_b\circ\overline{\mathrm{ev}}_a^{-1}
\circ\overline{\mathrm{ev}}_a\circ\pi=\bar g_b\circ\pi=g_b$.
Applying this identity to constant polynomials and to $x$ shows that
$\sigma_b(a)=b$ and $\sigma_b|_K=\tau$, so $\sigma_b\in\Emb_{\tau}(H,E)$.
Define $g:\{b\in E:\tau(p)(b)=0\}\to\Emb_{\tau}(H,E)$ by $g(b)=\sigma_b$.

Now, notice that given $b \in \{b\in E:\tau(p)(b)=0\}$, we have $f(g(b))=f(\sigma_b)=\sigma_b(a)=b$.
Conversely, given $\sigma \in \Emb_{\tau}(H,E)$, we have
$g(f(\sigma))=g(\sigma(a))=\sigma_{\sigma(a)}$,
and $\sigma_{\sigma(a)}=\sigma$, since these homomorphisms agree on $K$ and $a$.
Thus $g(f(\sigma))=\sigma$.
\end{proof}

\begin{lemma}\label{lem:count-embeddings}
Let $E/F$ be a finite normal separable extension, and let $\vec a$
be a finite tuple of elements of $E$.
Then $F(\vec a)/F$ is a finite extension and
\[
 |\Emb_F(F(\vec a),E)|=[F(\vec a):F].
\]
\end{lemma}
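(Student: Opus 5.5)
We argue by induction on the length $r$ of the tuple $\vec a=(a_1,\ldots,a_r)$, proving the stronger statement: for every intermediate field $K$ with $F\subseteq K\subseteq E$ and $[K:F]<\omega$, and every $\tau\in\Emb_F(K,E)$,
\[
 |\Emb_\tau(K(\vec a),E)|=[K(\vec a):K].
\]
The lemma is the case $K=F$, $\tau$ the inclusion. Finiteness of $F(\vec a)/F$ is immediate from the remark before Lemma~\ref{lem:simple-embeddings}: each $a_i$ lies in $E$, and $E/F$ is finite, hence algebraic. So $F(\vec a)=F(a_1,\ldots,a_r)$ is a finite extension of $F$.

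For $r=0$ both sides equal $1$. For the inductive step, put $K'=K(a_1,\ldots,a_{r-1})$, so that $K(\vec a)=K'(a_r)$. Every $\sigma\in\Emb_\tau(K(\vec a),E)$ restricts to some $\rho\in\Emb_\tau(K',E)$. Hence $\Emb_\tau(K(\vec a),E)$ is the disjoint union, over the finite set $\Emb_\tau(K',E)$, of the sets $\Emb_\rho(K'(a_r),E)$. No choice is involved, since this is a partition given by restriction. By Lemma~\ref{lem:simple-embeddings}, each $\Emb_\rho(K'(a_r),E)$ is in bijection with the set of roots in $E$ of $\rho(p)$, where $p\in K'[x]$ is the minimal polynomial of $a_r$ over $K'$.

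The key claim is that $\rho(p)$ has exactly $\deg p=[K'(a_r):K']$ distinct roots in $E$. Let $m\in F[x]$ be the minimal polynomial of $a_r$ over $F$. Since $E/F$ is normal and $m$ has the root $a_r\in E$, $m$ splits over $E$. Since $E/F$ is separable, $m$ has distinct roots. Because $p\mid m$ in $K'[x]$ and $\rho$ fixes $F$, we get $\rho(p)\mid\rho(m)=m$ in $E[x]$. Therefore $\rho(p)$ splits over $E$ into distinct linear factors, so it has $\deg\rho(p)=\deg p$ distinct roots in $E$.

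Summing over $\rho$ and applying the induction hypothesis to $K'$ (which is finite over $F$) gives
\[
|\Emb_\tau(K(\vec a),E)|=[K':K]\cdot[K'(a_r):K']=[K(\vec a):K],
\]
where the last equality is the tower formula.

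The main obstacle is the bookkeeping of the induction. The hypothesis must be phrased for arbitrary base embeddings $\tau$ so that the inductive step can be applied over each restriction $\rho$. To make the induction run in the right direction, it is cleanest to peel off the last element $a_r$ and apply the inductive hypothesis directly to $K'=K(a_1,\ldots,a_{r-1})$ over $K$ with the given $\tau$. Everything uses only finite sets, so no choice is needed.
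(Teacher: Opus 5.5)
Your proof is correct and follows the paper's own argument: induction on the length of the tuple, adjoining the last element, partitioning the embeddings by restriction, counting each fiber via Lemma~\ref{lem:simple-embeddings} together with $\rho(p)\mid m$ and the normality and separability of $E/F$, and finishing with the tower formula. Your strengthening to an arbitrary base $K$ and base embedding $\tau$ is harmless but unnecessary: you only ever invoke the inductive hypothesis with the same $K$ and $\tau$, and the varying base embeddings $\rho$ are already handled by Lemma~\ref{lem:simple-embeddings}, so with $K=F$ your argument is exactly the paper's.
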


\begin{proof}
Write $\vec a=(a_1,\ldots,a_r)$ and $H=F(\vec a)$.
We prove the assertion by induction on $r$.
For $r=0$, we have $H=F$, so $\Emb_F(H,E)=\{\mathrm{id}_F\}$ and $[H:F]=[F:F]=1$, as desired.

For the induction step, assume the assertion holds for tuples of length
$r$. We show it holds for 
tuples of length $r+1$.
Put $K=F(a_1,\ldots,a_r)$ and $H=K(a_{r+1})$.
By the induction hypothesis, $K/F$ is a finite extension, and $H/K$ is a finite extension since it is simple and algebraic.
Thus, $H/F$ is a finite extension and $[H:F]=[K:F][H:K]$.
Let $p$ be the minimal polynomial of $a_{r+1}$ over $K$.
Then $d=[H:K]$ is the degree of $p$.

Fix $\tau\in\Emb_F(K,E)$. The polynomial $p$ divides the
minimal polynomial $q$ of $a_{r+1}$ over $F$. Thus, $\tau(p)\mid q$, since $\tau$ fixes $F$.
By normality and separability, $q$ splits into distinct linear factors
in $E$, so $\tau(p)$ has exactly $d$ roots in $E$.
By Lemma~\ref{lem:simple-embeddings}, $|\Emb_{\tau}(H,E)|=d$.

Consider the restriction map $\rho:\Emb_F(H,E)\to\Emb_F(K,E)$ defined by
\[
 \rho(\sigma)=\sigma|_K.
\]
For every $\tau\in\Emb_F(K,E)$, its fiber is
$\rho^{-1}(\{\tau\})=\Emb_{\tau}(H,E)$ and therefore has $d$ elements.
These fibers are pairwise disjoint and their union is $\Emb_F(H,E)$.
By the induction hypothesis, $\Emb_F(K,E)$ has $[K:F]$ elements.
Consequently,
\[
 |\Emb_F(H,E)|
 =\sum_{\tau\in\Emb_F(K,E)}|\rho^{-1}(\{\tau\})|
 =[K:F]d=[H:F].
\]
\end{proof}

Notice that if $E/H$ and $H/F$ are field extensions and $E/F$ is a finite extension, then $E/H$ is a finite extension as well:
every linearly independent set of $E$ over $H$ is also
linearly independent over $F$, so it is finite, thus a
linearly independent set of $E$ over $H$ of maximum length is a basis of $E$ over $H$.

Again, if $E/H$ and $H/F$ are field extensions and $E/F$ normal, so is $E/H$: if $p\in H[x]$ is irreducible over $H$ and has a root in $E$, let $a \in E$ be such a root.
Then $p$ generates the ideal $(p)$ of the polynomials in $H[x]$ that vanish at $a$.
The minimal polynomial $q$ of $a$ over $F$ is irreducible over $F$ and has a root in $E$, so it splits into linear factors in $E$.
Moreover, in $H[x]$, $q\in (p)$ as $q(a)=0$, so $p$ divides $q$ in $H[x]$.
As $E[x]$ is a unique factorization domain, $p$ also splits into linear factors in $E$.

Finally, if $E/H$ and $H/F$ are field extensions and $E/F$ separable, so is $E/H$: if $a \in E$, let $p$ be its minimal polynomial over $F$ and $q$ its minimal polynomial over $H$.
As before, $q$ divides $p$ in $H[x]$, so $q$ is also separable.

In the notation below, $\sigma(\vec a)=\sigma(a_1,\ldots,a_r)=(\sigma(a_1),\ldots,\sigma(a_r))$ whenever $\vec a=(a_1,\ldots,a_r)$ is a finite tuple of elements of $E$ and $\sigma$ is a field embedding of $E$ into some field.

\begin{lemma}\label{lem:orbit}
Let $E/F$ be a finite normal separable extension, and let $\vec a$
be a finite tuple of elements of $E$. Then
\[
 [F(\vec a):F]
 =\bigl|\{\sigma(\vec a):\sigma\in\Gal(E/F)\}\bigr|.
\]
\end{lemma}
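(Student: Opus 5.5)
Write $H=F(\vec a)$. The plan is to factor the map $\sigma\mapsto\sigma(\vec a)$ through $\Emb_F(H,E)$ and apply Lemma~\ref{lem:count-embeddings}. First I would show that restriction $\Gal(E/F)\to\Emb_F(H,E)$, $\sigma\mapsto\sigma|_H$, is surjective. Second, I would show that $\tau\mapsto\tau(\vec a)$ is an injection from $\Emb_F(H,E)$ onto the set $\{\sigma(\vec a):\sigma\in\Gal(E/F)\}$. Together these give
\[
 \bigl|\{\sigma(\vec a):\sigma\in\Gal(E/F)\}\bigr|=|\Emb_F(H,E)|=[F(\vec a):F],
\]
where the last equality is Lemma~\ref{lem:count-embeddings}.

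Injectivity is immediate. An $F$-embedding of $H=F(\vec a)$ is determined by its values on $\vec a$, because $H$ consists of quotients of polynomial expressions in $\vec a$ with coefficients in $F$. So $\tau(\vec a)=\tau'(\vec a)$ forces $\tau=\tau'$. Once surjectivity of restriction is known, the image of $\tau\mapsto\tau(\vec a)$ is exactly the displayed orbit, since $\sigma(\vec a)=(\sigma|_H)(\vec a)$.

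The main step is the extension property: every $\tau\in\Emb_F(H,E)$ extends to some $\sigma\in\Emb_F(E,E)=\Gal(E/F)$. The equality $\Emb_F(E,E)=\Gal(E/F)$ holds because $E/F$ is finite. I would avoid any choice by using a counting argument instead of constructing extensions. By Lemma~\ref{lem:count-embeddings} applied to a finite tuple generating $E$ over $F$ (for instance an $F$-basis of $E$), we have $|\Emb_F(E,E)|=[E:F]$.

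Next, $E/H$ is again finite, normal and separable, by the remarks preceding the lemma, and $E=H(\vec b)$ for a finite tuple $\vec b$. I would redo the fiber count from the proof of Lemma~\ref{lem:count-embeddings}, adjoining the entries of $\vec b$ one at a time starting from $\tau$. This shows that for each $\tau\in\Emb_F(H,E)$ the fiber $\Emb_\tau(E,E)$ has exactly $[E:H]$ elements. The key points in that count are these:
\begin{itemize}
 \item at each stage, the image under the current embedding of the minimal polynomial over the current field divides the minimal polynomial over $F$;
 \item that minimal polynomial over $F$ splits with distinct roots in $E$, by normality and separability of $E/F$;
 \item Lemma~\ref{lem:simple-embeddings} then gives exactly the degree-many extensions at that stage.
\end{itemize}
In particular each fiber is nonempty, so restriction is surjective.

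A cleaner alternative avoids the repeated fiber count. The fibers of restriction partition $\Emb_F(E,E)$, and each fiber has at most $[E:H]$ elements. Since $[E:F]=[E:H][H:F]$ and there are $[H:F]$ elements $\tau$, no fiber can be empty. I expect this fiber-nonemptiness step to be the only real obstacle. Everything is finite, so no choice is needed.
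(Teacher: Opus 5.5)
Your proof is correct, but it takes a different route from the paper's.

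The paper never shows that $F$-embeddings of $H=F(\vec a)$ extend to automorphisms of $E$. It applies Lemma~\ref{lem:count-embeddings} twice, to $E/F$ and to $E/H$ (using that $E/H$ is again normal and separable), to get $|\Gal(E/F)|=[E:F]$ and $|\Gal(E/H)|=[E:H]$. It then notes that $\sigma(\vec a)=\tau(\vec a)$ if and only if $\tau^{-1}\sigma\in\Gal(E/H)$. So the orbit of $\vec a$ is in bijection with the left cosets of $\Gal(E/H)$ and has $[E:F]/[E:H]=[H:F]$ elements.

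You instead identify the orbit with $\Emb_F(H,E)$ and apply Lemma~\ref{lem:count-embeddings} to $H/F$ directly. This moves the work to proving that restriction $\Gal(E/F)\to\Emb_F(H,E)$ is onto. Your first argument for this is a relative version of Lemma~\ref{lem:count-embeddings}, starting from $\tau$ instead of the inclusion. It uses only normality and separability of $E/F$, and it yields the extension property as a by-product. The paper, by contrast, uses its lemmas as black boxes and replaces the extension property with a Lagrange-type count.

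In your ``cleaner alternative'', the bound $|\Emb_\tau(E,E)|\le[E:H]$ is stated without justification. It does hold, in either of two ways:
\begin{enumerate}
\item Run the same induction with ``at most'' in place of ``exactly''. A polynomial of degree $d$ has at most $d$ roots, so Lemma~\ref{lem:simple-embeddings} gives at most $d$ extensions at each step.
\item A nonempty fiber containing $\sigma_0$ equals $\{\sigma_0\eta:\eta\in\Gal(E/H)\}$. This is essentially the paper's coset argument.
\end{enumerate}
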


\begin{proof}
Put $H=F(\vec a)$. The extension $E/H$ is finite, normal and
separable.
Fix a finite tuple $\vec e$ generating $E$ over $F$, which also generates
$E$ over $H$. Applying Lemma~\ref{lem:count-embeddings} to $\vec e$ over
$F$ gives $
 |\Emb_F(E,E)|=|\Emb_F(F(\vec e),E)|=[F(\vec e):F]=[E:F]
$. Likewise, applying the lemma to the finite normal separable extension
$E/H$ and to $\vec e$ gives $
|\Emb_H(E,E)|= |\Emb_H(H(\vec e),E)|=[H(\vec e):H]=[E:H]$. Using that embeddings of a finite extension into itself are
automorphisms, we obtain
\[
 |\Gal(E/F)|=[E:F],\qquad |\Gal(E/H)|=[E:H].
\]

Put $G=\Gal(E/F)$ and $S=\Gal(E/H)$. For $\sigma,\tau\in G$,
\[
 \sigma(\vec a)=\tau(\vec a)
 \iff \tau^{-1}\sigma\in S
 \iff \sigma S=\tau S.
\]
Indeed, the first equivalence holds because $\tau^{-1}\sigma$ fixes
every entry of $\vec a$ exactly when it fixes $H=F(\vec a)$; the second
is the equality criterion for left cosets. Thus the map from the set of
left cosets
\[
 \{\sigma S:\sigma\in G\}\longrightarrow\{\sigma(\vec a):\sigma\in G\},
 \qquad \sigma S\longmapsto\sigma(\vec a),
\]
is well-defined and bijective.
Each coset has $|S|$ elements, so finite counting and the tower
formula give
\[
 \bigl|\{\sigma(\vec a):\sigma\in G\}\bigr|
 =\frac{|G|}{|S|}
 =\frac{[E:F]}{[E:H]}=[H:F].
\]
\end{proof}

For a nonzero polynomial $p\in F[x]$, an extension $E/F$ is a
\emph{splitting field} of $p$ over $F$ if $p$ splits into linear
factors in $E[x]$ and $E=F(R)$, where $R=\{a\in E:p(a)=0\}$.

\begin{lemma}\label{lem:splitting-normal}
An extension $E/F$ is finite and normal if and only if it is a
splitting field of a nonzero polynomial over $F$.
\end{lemma}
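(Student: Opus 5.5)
We prove the two directions separately, using only finite arguments so that no choice is needed.

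\emph{Splitting field implies finite and normal.} Suppose $E/F$ is a splitting field of a nonzero $p\in F[x]$. The root set $R=\{a\in E:p(a)=0\}$ is finite, with at most $\deg p$ elements. Enumerate it as a tuple $\vec r$. Each entry is algebraic over $F$, so $E=F(\vec r)$ is a finite extension by the tower argument recalled above.

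For normality, let $q\in F[x]$ be irreducible with a root $a\in E$. Pass to a splitting field $M$ of $pq$ over $E$. This is built by finitely many adjunctions of roots via quotients $K[x]/(f)$ by irreducible factors, which requires no choice because only finitely many steps and finite choices are involved. Let $b\in M$ be any root of $q$. By Lemma~\ref{lem:simple-embeddings} there is an $F$-embedding $\tau:F(a)\to M$ with $\tau(a)=b$.

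Now extend $\tau$ step by step along $F(a)(r_1)(r_1,r_2)\cdots$ to an embedding $\sigma:E\to M$. At each step the minimal polynomial of $r_{i+1}$ over the current field divides $p$. Its image under the current embedding therefore divides $p$, which splits in $M$, so Lemma~\ref{lem:simple-embeddings} yields an extension. Since $\sigma$ fixes $F$, it permutes the roots of $p$ in $M$. These roots are exactly $R$, because $p$ already splits in $E$ and $E\subseteq M$. Hence $\sigma(E)=F(\sigma(R))=F(R)=E$, and so $b=\sigma(a)\in E$. As $b$ was an arbitrary root of $q$ in $M$ and $q$ splits in $M$, it follows that $q$ splits in $E$.

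\emph{Finite and normal implies splitting field.} Let $E/F$ be finite and normal with basis $e_1,\dots,e_n$. Each $e_i$ is algebraic over $F$; let $m_i$ be its minimal polynomial and put $p=\prod m_i$. By normality each $m_i$ splits over $E$, hence so does $p$. Its root set $R$ contains every $e_i$, so $F(R)\supseteq F(e_1,\dots,e_n)=E$, and therefore $E=F(R)$.

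The main obstacle is in the first direction. One must construct the auxiliary splitting field $M$ in $\ZFm$ without choice, and check that the embedding $\sigma$ maps $E$ onto itself. The construction of $M$ is an induction on degree that adjoins one root at a time via $K[x]/(f)$. At each stage it picks an irreducible factor, for instance the first one in a canonical enumeration; failing that, the finitely many choices involved are permissible in $\ZFm$.
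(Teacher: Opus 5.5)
Your proposal is correct and follows the paper's proof essentially step by step. For one direction you take the product of the minimal polynomials of a finite basis. For the other you build an auxiliary finite extension $M$, embed $F(a)$ via Lemma~\ref{lem:simple-embeddings}, extend along the roots of $p$, and note that all roots of $p$ in $M$ already lie in $E$. The only differences are cosmetic: you take $M$ to be a splitting field of $pq$ rather than just an extension in which $q$ splits, and you prove $\sigma(E)=E$ where the paper only needs $\sigma(E)\subseteq E$.
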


\begin{proof}
Suppose first that $E/F$ is finite and normal. Take a finite
$F$-basis $b_1,\ldots,b_n$ of $E$, and let $p_i\in F[x]$ be the
minimal polynomial of $b_i$ over $F$. Each $p_i$ splits in $E$,
so $p=\prod_{i=1}^n p_i$ splits in $E$. Let $R$ be the set of roots
of $p$ in $E$.
As $b_i\in R$ for each $i$, we have $E=F(R)$. Thus $E$ is a splitting
field of $p$.

Conversely, suppose $E$ is a splitting field of $p\in F[x]$.
Its finite set of roots consists of algebraic elements, so $E/F$
is finite. Let $q\in F[x]$ be monic irreducible with a root $a\in E$.
There is a finite extension $M/E$ in which $q$ splits: successively
adjoin a root of a nonconstant irreducible factor of the remaining
polynomial, using the quotient by that factor, and divide out the
resulting linear factor. Induction on the remaining degree terminates
after at most $\deg q$ steps. This uses only finitely many choices.

Fix a root $b\in M$ of $q$. By Lemma~\ref{lem:simple-embeddings}, applied
with $\tau:F\to M$ the inclusion map,
there exists an $F$-embedding $\tau_0:F(a)\to M$ sending $a$ to $b$.
Write $K_0=F(a)$.

List the roots of $p$ in $E$ as $u_1,\ldots,u_n$.
They generate $E$ over $F$, and hence over $F(a)$.
Put $K_j=F(a,u_1,\ldots,u_j)$ for $0\le j\le n$.
We extend $\tau_0$ to an $F$-embedding $\tau_j:K_j\to M$ inductively.
Given $\tau_j$ with $j<n$, let $m\in K_j[x]$ be the minimal
polynomial of $u_{j+1}$ over $K_j$. Since $p(u_{j+1})=0$,
we have $m\mid p$, and thus $\tau_j(m)\mid p$, since $\tau_j$ fixes $F$.
The polynomial $p$ splits in $M[x]$, and $\tau_j(m)$ has the same
positive degree as $m$. Hence $\tau_j(m)$ also splits in $M[x]$
and has a root $v\in M$. Apply Lemma~\ref{lem:simple-embeddings} for
$K_j$, $u_{j+1}$, $K_{j+1}=K_j(u_{j+1})$, $M$, and
$\tau_j$. The minimal polynomial of $u_{j+1}$ over $K_j$ is $m$, and
$v$ is a root of $\tau_j(m)$, so the lemma gives an embedding
$\tau_{j+1}\in\Emb_{\tau_j}(K_{j+1},M)$ with
$\tau_{j+1}(u_{j+1})=v$. In particular, $\tau_{j+1}$ extends $\tau_j$.
Since $K_n=E$, this gives an $F$-embedding $\sigma=\tau_n:E\to M$
with $\sigma(a)=\tau_{0}(a)=b$.

Each $\sigma(u_i)$ is a root of $p$, and all roots of $p$ in $M$
already lie in $E$, since $p$ splits in $E$.
Hence $\sigma(E)\subseteq E$, and $b\in E$.
Since $b$ was an arbitrary root of $q$ in $M$, the polynomial $q$
splits in $E$. Thus $E/F$ is normal.
\end{proof}

Fix a normal separable algebraic extension $L/F$, and write
\[
 \Ecal(L,F)=\{E\in \mathcal P(L):F\subseteq E\subseteq L,
       \ E/F\text{ is finite, normal and separable}\}.
\]

\begin{lemma}\label{lem:finite-subextensions}
Let $L/F$ be a normal separable algebraic extension.
The family $\Ecal(L,F)$ is closed under finite composita and
\[
 \bigcup\Ecal(L,F)=L.
\]
\end{lemma}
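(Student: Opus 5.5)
Two things need proving: that $\Ecal(L,F)$ is closed under finite composita, and that every element of $L$ lies in some member of $\Ecal(L,F)$. For both I will use Lemma~\ref{lem:splitting-normal} to pass between ``finite normal'' and ``splitting field of a nonzero polynomial,'' and obtain separability from the separability of $L/F$. Separability is inherited by subextensions: every element of an intermediate field $E$ has the same minimal polynomial over $F$ whether viewed in $E$ or in $L$, so it has distinct roots. Hence for $F\subseteq E\subseteq L$ the condition $E\in\Ecal(L,F)$ reduces to $E/F$ being finite and normal.

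For composita, it suffices by induction to treat two fields $E_1,E_2\in\Ecal(L,F)$; the empty compositum is $F$ itself, which lies in $\Ecal(L,F)$. By Lemma~\ref{lem:splitting-normal}, $E_i$ is the splitting field of a nonzero $p_i\in F[x]$, so $E_i=F(R_i)$ with $R_i$ the finite set of roots of $p_i$ in $E_i$. Since $p_i$ splits in $E_i\subseteq L$, the set $R_i$ is exactly the set of roots of $p_i$ in $L$. The compositum $E_1E_2=F(R_1\cup R_2)$ is therefore generated by the roots in $L$ of $p_1p_2$, and $p_1p_2$ splits over it. So $E_1E_2$ is a splitting field of $p_1p_2$ over $F$, hence finite and normal by Lemma~\ref{lem:splitting-normal}, and separable by the remark above.

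For the union, fix $a\in L$ and let $q\in F[x]$ be its minimal polynomial. Since $L/F$ is normal and $q$ is irreducible with a root in $L$, $q$ splits over $L$. Let $R$ be its finite set of roots in $L$ and put $E=F(R)\subseteq L$. Then $q$ splits in $E$ and $E$ is generated by the roots of $q$, so $E$ is a splitting field of $q$, hence finite and normal by Lemma~\ref{lem:splitting-normal}, and separable. Thus $a\in E\in\Ecal(L,F)$, giving $\bigcup\Ecal(L,F)=L$. No choice is used anywhere, since all constructions are explicit definitions from finite data.

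The only delicate point is the finiteness of $F(R)$, and of $F(R_1\cup R_2)$, over $F$. This comes from the earlier remark that finitely many algebraic generators give a finite extension, applied to an enumeration of the finite root set, which exists without choice because the set is finite. I do not expect any step here to be a genuine obstacle.
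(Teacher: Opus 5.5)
Your proof is correct and follows essentially the same route as the paper. Both parts go through Lemma~\ref{lem:splitting-normal}: the compositum is the splitting field of the product of the $p_i$, and for the union you take the splitting field in $L$ of the minimal polynomial of $a$, with separability inherited from $L/F$. The only differences are cosmetic: you reduce to two fields by induction where the paper takes $p_1\cdots p_r$ at once, and your closing remark on finiteness is redundant, since Lemma~\ref{lem:splitting-normal} already gives it.
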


\begin{proof}
The empty compositum is $F$, which belongs to $\Ecal(L,F)$.
For $E_1,\ldots,E_r\in\Ecal(L,F)$, let
$E=F(E_1\cup\cdots\cup E_r)$ be their compositum inside $L$.
By Lemma~\ref{lem:splitting-normal}, each $E_i$ is a splitting
field of some nonzero $p_i\in F[x]$. The product $p_1\cdots p_r$
splits in $E$, and its roots generate $E$.
Thus $E$ is a splitting field of this product, so
Lemma~\ref{lem:splitting-normal} shows that $E/F$ is finite and normal.
It is separable because $E\subseteq L$ and $L/F$ is separable.
Hence $E\in\Ecal(L,F)$.

Now we show that $\bigcup\Ecal(L,F)=L$.
Let $a\in L$, let $p\in F[x]$ be its minimal polynomial.
Since $L/F$ is normal, $p$ splits in $L$.
Put $R=\{b\in L:p(b)=0\}$. Then $F(R)$ is a splitting field
of $p$, so $F(R)/F$ is finite and normal by
Lemma~\ref{lem:splitting-normal} and $F(R)/F$ is separable as it is a subextension
of $L/F$. Since $a\in F(R)\in\Ecal(L,F)$, every element of $L$
belongs to a member of $\Ecal(L,F)$. The reverse inclusion is trivial.
\end{proof}

If $E/F$ is an algebraic field extension and $F$ has characteristic zero, then $E/F$ is separable.
We give a direct proof for the normal case to make this paper self-contained.
If $p(x)=\sum_{i=0}^n c_i x^i\in F[x]$ is a polynomial, its formal derivative is
\[
 p'(x)=\sum_{i=1}^n i c_i x^{i-1}\in F[x].
\]
The formal derivative is an $F$-linear map from $F[x]$ to itself, and it satisfies the Leibniz rule, $(pq)'=p'q+pq'$ for all $p,q\in F[x]$.

\begin{lemma}\label{lem:normal-separable}
Let $E/F$ be a normal algebraic extension, and assume that $F$ has characteristic zero. Then $E/F$ is separable.
\end{lemma}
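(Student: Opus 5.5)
Fix $a\in E$ and let $p\in F[x]$ be its minimal polynomial, which is monic and irreducible of degree $n\ge 1$. The goal is to show that $p$ has $n$ distinct roots in $E$. Since $E/F$ is normal, $p$ splits into linear factors in $E[x]$. I will argue that a repeated root forces $p$ and its formal derivative $p'$ to share a nontrivial common factor. Irreducibility and characteristic zero then rule this out.

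First, suppose $p=(x-b)^2 g$ in $E[x]$ for some root $b\in E$ and some $g\in E[x]$. By the Leibniz rule,
\[
p'=2(x-b)g+(x-b)^2g',
\]
so $p'(b)=0$. Second, $p'$ lies in $F[x]$ and its leading coefficient is $n\cdot 1_F$. Because $F$ has characteristic zero, $n\cdot 1_F\neq 0$, so $p'$ is nonzero of degree $n-1$.

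Now $b$ is algebraic over $F$ with $p(b)=0$, and $p$ is monic irreducible, so $p$ is the minimal polynomial of $b$ over $F$. The set of polynomials in $F[x]$ vanishing at $b$ is the ideal $(p)$. Since $p'(b)=0$, we get $p\mid p'$ in $F[x]$. But $p'\neq 0$ and $\deg p'<\deg p$, which is a contradiction.

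Hence $p$ has no repeated roots in $E$. As $p$ splits over $E$, it has exactly $n$ distinct roots there. Since $a$ was arbitrary, $E/F$ is separable.

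Every step is elementary and uses no choice. The only point needing care is the identification of $p$ as the minimal polynomial of $b$, which follows from the uniqueness of monic irreducible annihilating polynomials recalled earlier in this section.
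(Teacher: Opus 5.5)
Your proof is correct and follows essentially the same route as the paper: normality makes $p$ split, a repeated root $b$ gives $p'(b)=0$ via the Leibniz rule, and characteristic zero makes $p'$ a nonzero polynomial of degree $\deg p-1$ vanishing at $b$, contradicting that $p$ is the minimal polynomial of $b$. The only cosmetic difference is that you phrase the contradiction as $p\mid p'$ in $F[x]$, whereas the paper phrases it as a violation of the degree minimality of $p$.
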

\begin{proof}
       Let $a\in E$ and let $p\in F[x]$ be its minimal polynomial over $F$. Since $E/F$ is normal, $p$ splits in $E$.
       Write $p(x)=\prod_{i=1}^n (x-a_i)^{m_i}$, where $a_1,\ldots,a_n$ are distinct roots of $p$ in $E$ and $m_i\ge 1$ is the multiplicity of $a_i$.
       If for some $j$ we have $m_j>1$, write $p(x)=(x-a_j)^{m_j}q(x)$, where $q(x)=\prod_{i\ne j}(x-a_i)^{m_i}$. By the Leibniz rule,
       \[
       p'(x)=m_j(x-a_j)^{m_j-1}q(x)+(x-a_j)^{m_j}q'(x).
       \]
       Hence $(x-a_j)^{m_j-1}$ divides $p'(x)$, and therefore $p'(a_j)=0$.
       Since $p$ is irreducible over $F$ and $p(a_j)=0$, it is also the minimal polynomial of $a_j$ over $F$.
       Moreover, as $m_j>1$ and $F$ has characteristic zero, $1\leq\deg p'=\deg p-1<\deg p$, contradicting its minimality.
\end{proof}

\section{Finite families of bases}\label{sec:finite}

Given a field extension $L/F$, $E\in\Ecal(L,F)$, and an $F$-basis $B$ of $L$, the intersection $B\cap E$ is finite as it is a set of linearly independent elements of the finite-dimensional $F$-vector space $E$.
In this context, we define:
\[
       q_{B,E}(x)=\prod_{b\in B\cap E}(x-b)\in E[x].
\]
In the notation above (and whenever needed), the empty product is defined to be $1$.

Moreover, if $\Ccal$ is a finite nonempty collection of $F$-bases of $L$, we define
\[
 P_{\Ccal,E}(z,x)=\prod_{B\in\Ccal}\bigl(z-q_{B,E}(x)\bigr)\in E[z,x].
\]

\begin{lemma}\label{lem:basis-polynomials}
Let $L/F$ be a normal separable algebraic extension, let
$E\in\Ecal(L,F)$, and let $\Ccal$ be a finite nonempty collection
of $F$-bases of $L$. Suppose that $P_{\Ccal,E}\in F[z,x]$.

Then, for every $B\in\Ccal$ and every $\sigma\in\Gal(E/F)$,
there exists $B'\in\Ccal$ such that
\[
 \sigma[B\cap E]=B'\cap E.
\]
\end{lemma}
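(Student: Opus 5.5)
Apply $\sigma$ to the coefficients of $P_{\Ccal,E}$ and use unique factorization in $E[x][z]$ to see that $\sigma$ permutes the factors; the roots of $\sigma(q_{B,E})$ then identify $\sigma[B\cap E]$ with some $B'\cap E$.

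Fix $B\in\Ccal$ and $\sigma\in\Gal(E/F)$. Extend $\sigma$ coefficientwise to a ring automorphism of $E[z,x]$, as was done for $E[x]$ before Lemma~\ref{lem:simple-embeddings}. Since every coefficient of $P_{\Ccal,E}$ lies in $F$ and $\sigma$ fixes $F$, we get $\sigma(P_{\Ccal,E})=P_{\Ccal,E}$. On the other hand, $\sigma$ is a ring homomorphism, so
\[
 \sigma(P_{\Ccal,E})=\prod_{B''\in\Ccal}\bigl(z-\sigma(q_{B'',E})(x)\bigr),
 \qquad
 \sigma(q_{B,E})(x)=\prod_{b\in B\cap E}\bigl(x-\sigma(b)\bigr).
\]
In the second identity the elements $\sigma(b)$ are pairwise distinct, since $\sigma$ is injective.

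Next view both sides of
\[
 \prod_{B''\in\Ccal}\bigl(z-\sigma(q_{B'',E})\bigr)=\prod_{B'\in\Ccal}\bigl(z-q_{B',E}\bigr)
\]
as monic polynomials in $z$ over the integral domain $R=E[x]$. Substitute $z=\sigma(q_{B,E})\in R$. The left side vanishes, so the right side gives $\prod_{B'\in\Ccal}\bigl(\sigma(q_{B,E})-q_{B',E}\bigr)=0$ in $R$. Since $R$ is an integral domain, some factor is zero, so $\sigma(q_{B,E})=q_{B',E}$ for some $B'\in\Ccal$.

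Finally, recover the sets from the polynomials. The set of roots in $E$ of $\sigma(q_{B,E})=\prod_{b\in B\cap E}(x-\sigma(b))$ is exactly $\sigma[B\cap E]$. The set of roots of $q_{B',E}$ is $B'\cap E$, because $E$ is a field and a product of linear factors vanishes only at their roots. Equal polynomials have equal root sets, so $\sigma[B\cap E]=B'\cap E$. In the degenerate case $B\cap E=\varnothing$, we have $q_{B,E}=1$, and the argument still gives some $B'$ with $q_{B',E}=1$, hence $B'\cap E=\varnothing$.

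The only step that needs care is the cancellation in the middle paragraph. I would handle it by the substitution argument in the integral domain $E[x]$, which avoids any appeal to unique factorization in two variables and uses no choice: $\Ccal$ is finite, and all objects involved are explicitly defined.
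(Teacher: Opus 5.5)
Your proof is correct and matches the paper's: $\sigma$ fixes $P_{\Ccal,E}$, so $\sigma(q_{B,E})$ is a root in $z$ of $P_{\Ccal,E}$ over the integral domain $E[x]$. This forces $\sigma(q_{B,E})=q_{B',E}$ for some $B'\in\Ccal$, and comparing root sets gives $\sigma[B\cap E]=B'\cap E$. As you note yourself, the unique factorization mentioned in your opening sentence is never needed, since the substitution argument in $E[x]$ is exactly what the paper uses.
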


\begin{proof}
Enumerate $\Ccal$ without repetitions as $(B_i)_{i<m}$.
Suppose $P_{\Ccal,E}\in F[z,x]$, and fix $i<m$
and $\sigma\in\Gal(E/F)$.
Then $\sigma(P_{\Ccal,E})=P_{\Ccal,E}$.
Since $q_{B_i,E}$ is a root of this polynomial in the variable $z$,
$\sigma(q_{B_i,E})$ is a root of
$\sigma(P_{\Ccal,E})=P_{\Ccal,E}$ in the variable $z$.
Hence,
\[
 \prod_{j<m}
       \bigl(\sigma(q_{B_i,E})-q_{B_j,E}\bigr)=0
       \quad\text{in }E[x].
\]
As $E[x]$ is an integral domain, there exists $j<m$
such that $\sigma(q_{B_i,E})=q_{B_j,E}$.
The set of roots of $\sigma(q_{B_i,E})$ is
$\sigma[B_i\cap E]$, and the set of roots of $q_{B_j,E}$
is $B_j\cap E$.
\end{proof}

\setcounter{claim}{0}
\begin{proof}[Proof of Theorem~\ref{lem:finite-bases}]
Let $m=|\Ccal|$ and fix $B_*\in\Ccal$.

By Lemma~\ref{lem:basis-polynomials}, for every $E\in\Ecal(L,F)$ and
$\sigma\in\Gal(E/F)$ there exists $B'\in\Ccal$ with
$\sigma(B_*\cap E)=B'\cap E$.

\begin{claim}\label{claim:coeff-degree}
       For every $E_1,\ldots,E_r\in\Ecal(L,F)$, and every tuple $\vec a$ listing all the nonzero coefficients of the polynomials $q_{B_*,E_1},\ldots,q_{B_*,E_r}$, we have $[F(\vec a):F]\le m$.
\end{claim}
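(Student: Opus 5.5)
The plan is to reduce to Lemma~\ref{lem:orbit} by passing to a single finite normal separable extension containing everything. Let $E$ be the compositum of $E_1,\ldots,E_r$ inside $L$. By Lemma~\ref{lem:finite-subextensions}, $E\in\Ecal(L,F)$, and each $E_k\subseteq E$. All entries of $\vec a$ lie in $E$, so Lemma~\ref{lem:orbit}, applied to the finite normal separable extension $E/F$, gives
\[
 [F(\vec a):F]=\bigl|\{\sigma(\vec a):\sigma\in\Gal(E/F)\}\bigr|.
\]
It therefore suffices to show that the orbit of $\vec a$ under $\Gal(E/F)$ has at most $m$ elements. For this we will exhibit a map from $\Ccal$ onto that orbit.

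Fix $\sigma\in\Gal(E/F)$ and $k\le r$. Since $E_k/F$ is normal, $\sigma$ maps $E_k$ into itself. Indeed, each element of $E_k$ is sent to a root of its minimal polynomial over $F$, and all such roots lie in $E_k$. Because $E_k/F$ is finite, $\sigma|_{E_k}\in\Gal(E_k/F)$. Lemma~\ref{lem:basis-polynomials}, applied to $E_k$ (whose hypothesis $P_{\Ccal,E_k}\in F[z,x]$ is the standing assumption of Theorem~\ref{lem:finite-bases}), yields some $B'\in\Ccal$ with $\sigma[B_*\cap E_k]=B'\cap E_k$.

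The key point is that one $B'$ must work simultaneously for all $k$. For this we apply Lemma~\ref{lem:basis-polynomials} to $E$ itself rather than to each $E_k$. This gives $B'\in\Ccal$ with $\sigma[B_*\cap E]=B'\cap E$. Intersecting with $E_k$, and using that $\sigma$ restricts to a bijection of $E_k$, we get
\[
 \sigma[B_*\cap E_k]=\sigma[B_*\cap E]\cap E_k=B'\cap E_k
\]
for every $k$. Hence $\sigma(q_{B_*,E_k})=q_{B',E_k}$ for all $k$ at once.

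Consequently, $\sigma(\vec a)$ is determined by $B'$. The tuple $\vec a$ lists the nonzero coefficients of the $q_{B_*,E_k}$ at specified positions (polynomial index $k$, degree $j$). Applying $\sigma$ sends each such coefficient to the corresponding coefficient of $q_{B',E_k}$, and $\sigma$ preserves being nonzero. So $\sigma(\vec a)$ is the tuple of coefficients of $q_{B',E_1},\ldots,q_{B',E_r}$ read off at the same positions, and this tuple depends only on $B'$. This defines a map from a subset of $\Ccal$ onto the orbit, so the orbit has at most $m$ elements, and thus $[F(\vec a):F]\le m$.

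The only delicate step is the simultaneity just described: choosing $B'$ separately for each $E_k$ would not bound the orbit of the whole tuple by $m$. Passing to the compositum $E$ removes this difficulty. What remains routine is checking that $\sigma$ restricts to automorphisms of each $E_k$, which follows from normality and finiteness, and keeping the coefficient positions consistent.
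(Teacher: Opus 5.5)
Your proposal is correct and follows essentially the paper's argument: pass to the compositum $E\in\Ecal(L,F)$, apply Lemma~\ref{lem:basis-polynomials} to $E$ itself rather than to each $E_k$, use $\sigma(E_k)=E_k$ to get $\sigma[B_*\cap E_k]=B'\cap E_k$ for all $k$ at once, and bound the $\Gal(E/F)$-orbit of $\vec a$ by $m$ via Lemma~\ref{lem:orbit}. The only cosmetic difference is the domain of the surjection onto the orbit. The paper uses the set $\{\sigma[B_*\cap E]:\sigma\in\Gal(E/F)\}\subseteq\{B\cap E:B\in\Ccal\}$ and checks that $\sigma[B_*\cap E]=\tau[B_*\cap E]$ forces $\sigma(\vec a)=\tau(\vec a)$, whereas you index the orbit directly by elements of $\Ccal$.
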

\begin{proof}[Proof of Claim]
       Let $E=F(E_1\cup\dots\cup E_r)\in\Ecal(L,F)$.
       Fix a tuple $\vec a$ as in the statement.

       Suppose that $\sigma,\tau\in\Gal(E/F)$ satisfy
       $\sigma[B_*\cap E]=\tau[B_*\cap E]$.
       Since $E_i/F$ is normal for each $i$, we have
       $\sigma(E_i)=\tau(E_i)=E_i$.
       Thus, for every $1\le i\le r$,
       \begin{align*}
       \sigma[B_*\cap E_i]
       &=\sigma[B_*\cap E\cap E_i]
        =\sigma[B_*\cap E]\cap\sigma(E_i)\\
       &=\sigma[B_*\cap E]\cap E_i
        =\tau[B_*\cap E]\cap E_i\\
       &=\tau[B_*\cap E]\cap\tau(E_i)
        =\tau[B_*\cap E\cap E_i]\\
       &=\tau[B_*\cap E_i].
       \end{align*}
       Therefore, for every $1\le i\le r$,
       \[
       \begin{aligned}
       \sigma(q_{B_*,E_i})
       &=\prod_{b\in B_*\cap E_i}(x-\sigma(b))
        =\prod_{c\in\sigma[B_*\cap E_i]}(x-c)\\
       &=\prod_{c\in\tau[B_*\cap E_i]}(x-c)
        =\prod_{b\in B_*\cap E_i}(x-\tau(b))
        =\tau(q_{B_*,E_i}).
       \end{aligned}
       \]
       
       Comparing coefficients, we see that $\sigma$ and $\tau$
       agree on every entry of $\vec a$, so
       $\sigma(\vec a)=\tau(\vec a)$.
       Thus we may define $\ell:\{\sigma[B_*\cap E]:\sigma\in\Gal(E/F)\}\to\{\sigma(\vec a):\sigma\in\Gal(E/F)\}$ by
       \[
       \ell(\sigma[B_*\cap E])=\sigma(\vec a).
       \]
       By Lemma~\ref{lem:basis-polynomials}, $\dom \ell\subseteq\{B\cap E:B\in\Ccal\}$, which has cardinality at most $m$, and $\ell$ is onto.
       Therefore, by Lemma~\ref{lem:orbit},
       \[
       [F(\vec a):F]
       =|\{\sigma(\vec a):\sigma\in\Gal(E/F)\}|\leq|\{B\cap E:B\in\Ccal\}|\le m.\qedhere
       \]
\end{proof}

Let $\coeff(q)$ denote the finite set of coefficients of a
polynomial $q$, and define
\begin{equation*}
S=\bigcup_{E\in\Ecal(L,F)}\coeff(q_{B_*,E}),\qquad
 D=F\left(S\right).
\end{equation*}

\begin{claim}\label{claim:D-degree} $[D:F]\le m$.
\end{claim}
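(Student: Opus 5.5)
Claim~\ref{claim:coeff-degree} already bounds by $m$ the degree of every field generated over $F$ by the coefficients coming from finitely many members of $\Ecal(L,F)$. The plan is to pass from these finite pieces to all of $D$. No choice is involved, because the bound $m$ is uniform.

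First, I will show that $D$ is the directed union of the fields $F(\vec a)$. Here $\vec a$ ranges over finite tuples listing the nonzero coefficients of $q_{B_*,E_1},\ldots,q_{B_*,E_r}$, for finite families $E_1,\ldots,E_r\in\Ecal(L,F)$. Every element of $D=F(S)$ is a rational expression in finitely many elements of $S$, so it lies in such an $F(\vec a)$. The zero coefficients contribute nothing, since $0\in F$. Conversely, each $F(\vec a)\subseteq D$. The family is directed: the fields coming from two finite families are both contained in the field coming from their union. No choices are needed here, since we only quantify over finite families and their tuples.

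Next, among these fields I will pick one of maximal degree over $F$. The set of degrees $[F(\vec a):F]$ is a nonempty set of natural numbers bounded by $m$ (Claim~\ref{claim:coeff-degree}), so it has a maximum $d\le m$. Fix a finite family realizing $d$, with field $D_0=F(\vec a_0)$. Now let $F(\vec a)$ be any other field in the family. The field coming from the union of the two finite families contains both $D_0$ and $F(\vec a)$, and its degree is at most $d$ by maximality. By the tower formula it therefore equals $D_0$, so $F(\vec a)\subseteq D_0$. Hence $D=D_0$ and $[D:F]=d\le m$.

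The only delicate point is checking that the union of two tuples is again a tuple of the form allowed in Claim~\ref{claim:coeff-degree}. It must list all the nonzero coefficients of the combined list of polynomials, possibly with repetitions, and that is harmless. One also has to check that the tower formula gives equality from $D_0\subseteq D'$ together with $[D':F]\le[D_0:F]$. Both checks are routine. The finiteness of all the degrees involved is exactly what Claim~\ref{claim:coeff-degree} supplies.
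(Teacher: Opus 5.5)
Your proof is correct. It differs from the paper's mainly in where the maximality argument is placed.

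The paper's route is this:
\begin{enumerate}
\item It takes arbitrary $d_1,\ldots,d_{m+1}\in D$.
\item It places all of them in a single $F(\vec a)$, via a finite $S'\subseteq S$ and finitely many $E_i$ covering $S'$.
\item Claim~\ref{claim:coeff-degree} then makes them $F$-linearly dependent.
\item Finally, it takes an $F$-linearly independent tuple in $D$ of maximal length $r\le m$, which must span $D$.
\end{enumerate}

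You instead maximize the degree over the subfields $F(\vec a)$ themselves. Directedness and the tower formula then show that a maximal one absorbs every other, so it is all of $D$.

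Both arguments rest on the same two facts: every finite subset of $D$ lies in a single $F(\vec a)$, and Claim~\ref{claim:coeff-degree} gives the uniform bound $m$.

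Your version yields slightly more. It shows that $D$ is generated by the coefficients of $q_{B_*,E_1},\ldots,q_{B_*,E_r}$ for one finite family $E_1,\ldots,E_r$. The paper's version avoids the tower formula and the check that concatenated tuples are admissible in Claim~\ref{claim:coeff-degree}. It needs only the elementary fact that a uniform bound on the length of independent tuples produces a finite basis.
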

\begin{proof}[Proof of Claim]
Let $d_1,\ldots,d_{m+1}\in D=F(S)$.
There exists a finite subset $S'\subseteq S$
such that $d_1,\ldots,d_{m+1}\in F(S')$.

Choose
$E_1,\ldots,E_r\in\Ecal(L,F)$ so that $S'\subseteq\bigcup_{i=1}^r\coeff(q_{B_*,E_i})$.

Let $\vec a$ be a tuple listing all the nonzero coefficients of the polynomials $q_{B_*,E_1},\ldots,q_{B_*,E_r}$.
Then $d_1,\ldots,d_{m+1}\in F(S')\subseteq F(\vec a)$ (as $0 \in F$).
As $[F(\vec a):F]\le m$ by Claim~\ref{claim:coeff-degree}, we conclude that $(d_1,\ldots,d_{m+1})$ is not $F$-linearly independent.

Thus every $F$-linearly independent tuple in $D$ has length at most $m$.
There is therefore a greatest possible length $r\le m$ of such a tuple.
Fix a linearly independent tuple of length $r$. It spans $D$, since any
element outside its span would extend it to a longer linearly independent
tuple. Hence it is a finite basis of $D$ over $F$, and $[D:F]=r\le m$.
\end{proof}
For each $b\in B_*$, let $\mu_b(x)\in D[x]$ be its monic minimal polynomial over $D$.
\begin{claim}\label{claim:roots-in-L}
       For every $b\in B_*$, all the roots of $\mu_b$ belong to $B_*$, and $\mu_b$ splits into distinct linear factors in $L[x]$.
\end{claim}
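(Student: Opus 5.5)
The plan is to compare $\mu_b$ with the polynomial $q_{B_*,E}$ for a suitable $E\in\Ecal(L,F)$ containing $b$. The point is that $q_{B_*,E}$ already lies in $D[x]$, has $b$ as a root, and has all its roots in $B_*$, each with multiplicity one.

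Fix $b\in B_*$. Since $b\in L$, Lemma~\ref{lem:finite-subextensions} gives some $E\in\Ecal(L,F)$ with $b\in E$, so $b\in B_*\cap E$. By definition $\coeff(q_{B_*,E})\subseteq S\subseteq D$, hence
\[
 q_{B_*,E}(x)=\prod_{c\in B_*\cap E}(x-c)\in D[x],
\]
and $q_{B_*,E}(b)=0$. Thus $b$ is algebraic over $D$, so $\mu_b$ is defined. Moreover, the ideal of polynomials in $D[x]$ vanishing at $b$ is generated by $\mu_b$, so $\mu_b$ divides $q_{B_*,E}$ in $D[x]$, and therefore in $L[x]$.

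Next we use unique factorization in $L[x]$ (equivalently in $E[x]$). The polynomial $q_{B_*,E}$ is a product of the linear factors $x-c$, one for each $c$ in the finite set $B_*\cap E$, and these factors are pairwise distinct. Hence every monic divisor of $q_{B_*,E}$ is a product of a subset of these factors. In particular,
\[
 \mu_b=\prod_{c\in R}(x-c)\quad\text{for some } R\subseteq B_*\cap E.
\]
This shows at once that $\mu_b$ splits in $L[x]$, that its roots are pairwise distinct, and that every root lies in $B_*\cap E\subseteq B_*$.

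There is no real obstacle here. The one point needing care is that the coefficients of $q_{B_*,E}$ lie in $D$ for every $E\in\Ecal(L,F)$, which holds by the very definition of $S$ and $D$; no enlargement of $E$ to contain $D$ is needed. Everything else is the standard divisibility of annihilating polynomials by the minimal polynomial, plus unique factorization, and neither uses choice.
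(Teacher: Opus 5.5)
Your proposal is correct and follows essentially the same route as the paper: choose $E\in\Ecal(L,F)$ with $b\in E$, observe that $q_{B_*,E}\in D[x]$ vanishes at $b$, and conclude that $\mu_b$ divides it, so $\mu_b$ has its roots in $B_*$ and splits into distinct linear factors in $L[x]$. Your citation of Lemma~\ref{lem:finite-subextensions} for the existence of $E$ and your explicit use of unique factorization in $L[x]$ only make precise steps the paper leaves implicit.
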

\begin{proof}[Proof of Claim]
Fix $b$ and let $E\in\Ecal(L,F)$ be such that $b\in E$.
By the definition of $D$,
$q_{B_*,E}$ belongs to $D[x]$.
As $q_{B_*,E}(b)=0$, it follows that $\mu_b(x)$ divides $q_{B_*,E}(x)$ in $D[x]$.
This implies that every root of $\mu_b$ is a root of $q_{B_*,E}$, and hence belongs to $B_*$.

Moreover, since $q_{B_*,E}$ splits into distinct linear factors in $L[x]$, so does $\mu_b$.
\end{proof}

Define in $B_*$ the equivalence relation $\sim$ given by
$b\sim c\Longleftrightarrow\mu_b=\mu_c$.
For each $b\in B_*$, let $C_b$ be its equivalence class under $\sim$. We claim that
$C_b=\{c\in L:\mu_b(c)=0\}$.
Indeed, if $c\in C_b$, then $\mu_c=\mu_b$, so $\mu_b(c)=0$.
Conversely, if $\mu_b(c)=0$, then $c\in B_*$ by
Claim~\ref{claim:roots-in-L}. Moreover, $\mu_c$ divides $\mu_b$, and
since $\mu_b$ is irreducible and both polynomials are monic, we have
$\mu_c=\mu_b$. Thus $c\in C_b$. In particular, every equivalence class
is finite and nonempty.

For each equivalence class $C$, define
\[
 S_C=\sum_{b\in C}b.
\]
We claim that $S_C\in D$. Indeed, fix $b\in C$.
Since the roots of $\mu_b$ are precisely the elements of $C$, are
distinct, and all have multiplicity one, Vi\`ete's formula shows that
$-S_C$ is a coefficient of $\mu_b$.
Hence $S_C\in D$.

Now we claim that the family $(S_C: C \in B_*/\sim)$ is
$F$-linearly independent.
Indeed, let $(C_i: i<n)$ be a finite family of pairwise distinct equivalence classes, and let $(a_i: i<n)$ be a family of elements of $F$ such that
\[
 \sum_{i< n} a_i S_{C_i}=0.
\]

Then
\[
 0=\sum_{i<n} a_i S_{C_i}
  =\sum_{i<n} a_i\sum_{b\in C_i}b
  =\sum_{i< n}\sum_{b\in C_i}a_ib.
\]
Since the classes are pairwise disjoint and subsets of the $F$-basis $B_*$, each $a_i$ must be zero.

Finally, by Claim~\ref{claim:D-degree}, an $F$-linearly independent family in
$D$ has at most $m$ elements. Hence there are at most $m$ equivalence
classes. Each class is finite, so their union $B_*$ is finite. Since
$B_*$ is an $F$-basis of $L$, the extension $L/F$ has finite degree.
\end{proof}

\section{Well-ordering an algebraic extension}\label{sec:extension}

In this section, we work in $\ZFm$, except in Theorem~\ref{thm:algebraic}, where we also assume $\MC$.

The main goal of this section is to prove Theorem~\ref{thm:algebraic}, which shows that, assuming $\MC$ and given a field extension $L/K$ that is normal, separable and algebraic, if certain intermediate extensions of $L/K$ have bases and if $K$ is well-orderable, then this well-ordering can be extended to a well-ordering of $L$.
The idea of the proof is to recursively define a well-ordering of an increasing chain of subfields of $L$ starting with $K$ and ending with $L$.
The ideas are somewhat adapted from the standard well-ordering of the constructible universe $\mathbf L$ (see, e.g., \cite[Definitions~II.6.18 and~II.6.19 and Theorem~II.6.20]{Kunen2011}).

We use basic first-order logic internalized within $\ZFm$.
Formally, it is constructed using Polish notation, so no parentheses are needed.
The definition and basic development of internalized first-order logic and the satisfaction relation do not depend on the Axiom of Foundation or the Axiom of Choice.
We refer the reader to \cite[\S\S II.4--II.8]{Kunen2009} for details.

By $\Var$, we denote a countably infinite set of variables fixed in advance.

Let $\Symbols$ denote the set $\Var\cup\{0,1,+,-,\cdot\}\cup\{=,\land, \vee, \rightarrow, \leftrightarrow, \neg, \exists, \forall\}$.
Fix once and for all an order $<_S$ of type $\omega$ of the set $\Symbols$.

Consider the first-order language of field theory with constants in $D$, $\Lcal_D=\{0,1,+,-,\cdot\}\cup D$, where $D$ is a set of constant symbols disjoint from $\Symbols$.
For this to hold in the future, we can assume that $\Symbols$ has no pairs, triples or natural numbers (for example, code the symbols by distinct $4$-tuples of natural numbers).

The set of all $\Lcal_D$-formulas is countable in $\ZFm$ when $D$ is countable.

If $\varphi$ is an $\Lcal_D$-formula and $k \in \omega$, an enumeration of the free variables of $\varphi$ in $k$ parameters is a nonempty finite sequence $\vec v=(v_0,\ldots,v_k)$ of distinct variables in $\Var$ for which the set of free variables of $\varphi$ is contained in $\{v_0,\ldots,v_{k}\}$.
Notice that the length of $\vec v$ is $k+1$, not $k$.
That is happening on purpose: $v_0$ is a variable that will not be thought of as a parameter, as will hopefully be clear in the sequel.

An $\Lcal_D$ \emph{formula with $k$ parameters} is a pair $(\varphi,\vec v)$ consisting of an $\Lcal_D$-formula $\varphi$ and an enumeration of the free variables of $\varphi$ in $k$ parameters. We denote it by $\varphi(\vec v)$.

Fix once and for all a well-ordering $<_{\Var}$ of type $\omega$ of
$\Var^{<\omega}$.

Let $D$ be disjoint from $\Symbols$, and let $<_D$ be a
well-ordering of $D$. Define $<'_D$ on $D\cup\Symbols$ by
\[
 <'_D=<_S\cup \{(d,s):d\in D, s\in \Symbols\}\cup <_D.
\]
For $s,t\in(\Symbols \cup D)^{<\omega}$, define
\[
 s<^*_D t\iff |s|<|t|\ \text{or}\
 \bigl(|s|=|t|\ \text{and}\ s<^{|s|}_{D}t\bigr),
\]
where $<^{n}_D$ is the lexicographic order on $(\Symbols \cup D)^{n}$ induced by $<'_D$.
For $\mathcal L_D$-formulas with parameters
$(\varphi,\vec v)$ and $(\psi,\vec w)$, define
\[
 (\varphi,\vec v)\mathrel{\lhd_D}(\psi,\vec w)
 \quad\Longleftrightarrow\quad
 \varphi<^*_D\psi
 \quad\text{or}\quad
 \bigl(\varphi=\psi\ \text{and}\ \vec v<_{\Var}\vec w\bigr).
\]

\begin{lemma}
The relation $\lhd_D$ is a well-ordering of the set of all
$\mathcal L_D$-formulas with parameters.
\end{lemma}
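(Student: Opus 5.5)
The plan is to show that $\lhd_D$ is a lexicographic combination of two well-orderings, and that such a combination is again a well-ordering.

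First, $<'_D$ is a well-ordering of $D\cup\Symbols$. It places all of $D$ (ordered by $<_D$) before all of $\Symbols$ (ordered by $<_S$, of type $\omega$). Every pair of distinct elements is compared exactly once, and transitivity is checked by cases on membership in $D$ or $\Symbols$. Hence $<'_D$ is a strict linear order. For well-foundedness, a nonempty subset $A$ has a least element: if $A\cap D\neq\varnothing$, take the $<_D$-least element of $A\cap D$; otherwise take the $<_S$-least element of $A$. The order type is $\mathrm{type}(D,<_D)+\omega$.

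Second, I will show that each $<^{n}_D$ well-orders $(\Symbols\cup D)^{n}$, by induction on $n$. The lexicographic order on $n$-tuples over a well-order is a well-order: given a nonempty set $A$ of $n$-tuples, pick the least possible first coordinate among members of $A$, then the least second coordinate among those members with that first coordinate, and so on for $n$ steps. This uses no choice, since each step takes a least element of a definable set. Linearity is immediate.

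Then $<^*_D$ well-orders $(\Symbols\cup D)^{<\omega}$. Given a nonempty $A$, let $n$ be the least length occurring in $A$ (lengths lie in $\omega$). The $<^n_D$-least element of $A\cap(\Symbols\cup D)^n$ is then the $<^*_D$-least element of $A$. Linearity and transitivity follow by comparing lengths first.

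Finally, $\lhd_D$ is the lexicographic product of $<^*_D$ on the formula component with $<_{\Var}$ on the parameter enumeration. Note that $\Lcal_D$-formulas, written in Polish notation, are finite sequences in $(\Symbols\cup D)^{<\omega}$, and that $\vec v\in\Var^{<\omega}$. Irreflexivity, transitivity and totality follow by cases: two distinct pairs either differ in $\varphi$, and are compared by $<^*_D$, or have equal $\varphi$ and distinct $\vec v$, and are compared by $<_{\Var}$. For a nonempty set $A$ of formulas with parameters, take the $<^*_D$-least formula $\varphi_0$ occurring as a first component. Among the pairs in $A$ with first component $\varphi_0$, take the one whose enumeration is $<_{\Var}$-least. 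That pair is the $\lhd_D$-least element of $A$.

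There is no real obstacle. The only point needing care is that every step picks a least element of a definable nonempty set and uses neither Foundation nor choice. This holds because each step relies only on the well-orderings $<_S$, $<_D$, $<_{\Var}$ and on the well-ordering of $\omega$.
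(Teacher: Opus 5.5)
Your proof is correct and follows the same route as the paper. You show that $<'_D$ well-orders $D\cup\Symbols$, then the lexicographic orders $<^n_D$, then the length-then-lexicographic order $<^*_D$, and finally $\lhd_D$ as the lexicographic combination of $<^*_D$ with $<_{\Var}$, supplying the explicit least-element arguments that the paper leaves implicit.
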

\begin{proof}
The order $<'_D$ is a well-ordering of $D\cup\Symbols$. Hence,
for every $n\in\omega$, the lexicographic order $<^n_D$ is a
well-ordering of $(D\cup\Symbols)^n$. Comparing first by length and
then lexicographically therefore makes $<^*_D$ a well-ordering
of $(D\cup\Symbols)^{<\omega}$. Its restriction to the set of all
$\mathcal L_D$-formulas is consequently a well-ordering.

Finally, $\lhd_D$ is the lexicographic order induced by the
well-orderings $<^*_D$ and $<_{\Var}$. Its restriction to the set of
$\mathcal L_D$-formulas with parameters is therefore a
well-ordering.
\end{proof}

Let $E$ be an $\Lcal_D$-structure, and let $\varphi(\vec v)$ be a
formula with $k$ parameters. For $a\in E$ and
$\vec b=(b_i)_{i<k}\in E^k$, we write $E\models\varphi(a,\vec b)$ if $E\models\varphi[\sigma]$, where $\sigma:\Var\to E$ is a variable assignment such that $\sigma(v_0)=a$ and $\sigma(v_{i+1})=b_i$ for every $i<k$.

\begin{definition}\label{def:formula-order}
An \emph{extension tuple} is a tuple $T=(L,F,\prec,D,<_D,\mathbf u)$ such that:
\begin{itemize}
    \item $L/F$ is a field extension,
    \item $\prec$ is a well-ordering of $F$,
    \item $D$ is a set disjoint from $\Symbols$,
    \item $\mathbf u=(u_d)_{d\in D}$ is a family in $L$,
    \item $<_D$ is a well-ordering of $D$.
\end{itemize}
In the notation above, we define:
\begin{enumerate}
    \item $E_T=F(u_d:d\in D)$, where $E_T$ is regarded as an $\Lcal_D$-structure with the interpretation of each $d\in D$ being $u_d$.
    \item For each $k\in\omega$, let $\prec_{\mathrm{lex}}^k$ be the
    lexicographic well-ordering of $F^k$ induced by $\prec$. Define
    $\sqsubset_T$ on the pairs $(\varphi(\vec v),\vec b)$, where
    $\varphi(\vec v)$ is an $\Lcal_D$-formula with $k$ parameters and
    $\vec b\in F^k$, by
    \[
    \begin{aligned}
    (\varphi(\vec v),\vec b)\mathrel{\sqsubset_T}
    (\psi(\vec w),\vec c)
    \quad\Longleftrightarrow\quad&
    \varphi(\vec v)\mathrel{\lhd_D}\psi(\vec w)\\
    &\text{or}\quad
    \bigl(\varphi(\vec v)=\psi(\vec w)
    \ \text{and}\ \vec b\mathrel{\prec_{\mathrm{lex}}^k}\vec c\bigr).
    \end{aligned}
    \]
    In the second clause, equality of the first coordinates ensures
    that both tuples belong to the same $F^k$. Thus $\sqsubset_T$ is
    the lexicographic well-ordering obtained by first comparing the
    formulas with parameters and then their corresponding tuples from
    $F^k$.
\end{enumerate}

Such a pair $(\varphi(\vec v),\vec b)$ is said to \emph{describe}
$x\in E_T$ if
\[
 \{a\in E_T:E_T\models\varphi(a,\vec b)\}=\{x\}.
\]
Every $c\in E_T$ admits such
a description as every element of $F(\mathbf u)$ satisfies the relation
\[
    c\cdot \sum_{i \in I} b_i\prod_{j \in J_i} u_j^{m_{i,j}} = \sum_{i \in I} b_i'\prod_{j \in J_i} u_j^{m_{i,j}}
\]
for some finite set $I$, some finite subsets $J_i$ of $D$ for each $i \in I$, exponents $m_{i,j}\in\omega$ for $i\in I$ and $j\in J_i$, and $b_i,b_i' \in F$ for each $i \in I$ with $\sum_{i \in I} b_i\prod_{j \in J_i} u_j^{m_{i,j}}\neq 0$.
In this case, this relation determines $c$.

For $x\in E_T$, put
\[
 c_T(x)=\min_{\sqsubset_T}
       \{(\varphi(\vec v),\vec b):
              (\varphi(\vec v),\vec b)\text{ describes }x\}.
\]
Define a relation $\prec_T^+$ on $E_T$ by
\begin{equation}
 x\prec_T^+y\ \Longleftrightarrow\quad
 \begin{aligned}[t]
 &(x,y\in F\ \text{and}\ x\prec y)\\
 &\quad\text{or}\ (x\in F\ \text{and}\ y\in E_T\setminus F)\\
 &\quad\text{or}\ (x,y\in E_T\setminus F\ \text{and}\ c_T(x)\sqsubset_T c_T(y)).
 \end{aligned}
\end{equation}
\end{definition}

The following is straightforward from the previous definition and left to the reader.

\begin{lemma}\label{lem:orderExtension}
Let $T=(L,F,\prec,D,<_D,\mathbf u)$ be an extension tuple.
Then $(E_T,\prec_T^+)$ is a well-order and $(F,\prec)$ is an initial segment of $(E_T,\prec_T^+)$.
\end{lemma}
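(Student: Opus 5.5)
We verify the claims in three steps: $c_T$ is a well-defined injection, $\prec_T^+$ is a strict linear order, and it is well-founded with $(F,\prec)$ as an initial segment.

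\textbf{Step 1: $c_T$ is well defined and injective.} Let $\mathcal D_T$ be the set of all pairs $(\varphi(\vec v),\vec b)$ with $\varphi(\vec v)$ an $\Lcal_D$-formula with $k$ parameters and $\vec b\in F^k$. The first task is to show that $\sqsubset_T$ well-orders $\mathcal D_T$. It is the lexicographic combination of two orders: the well-ordering $\lhd_D$ of formulas with parameters, from the preceding lemma, and, on each fibre of a fixed formula with $k$ parameters, the lexicographic order $\prec^k_{\mathrm{lex}}$ on $F^k$. The latter is a well-ordering of a finite power of a well-ordered set, proved by induction on $k$ with no choice.

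Next, for each $x\in E_T$ the set of pairs describing $x$ is nonempty, by the displayed relation in Definition~\ref{def:formula-order}. Concretely, take $\varphi$ to be the equation $v_0\cdot\sum_i v_{i}'\prod_j d^{m_{i,j}}=\sum_i v_i''\prod_j d^{m_{i,j}}$, with the coefficients $b_i,b_i'$ supplied as parameters. Its unique solution in $E_T$ is $x$, because the denominator is nonzero. So the $\sqsubset_T$-minimum $c_T(x)$ exists, and it is obtained by the separation axiom without any choice.

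A pair describes at most one element, since describing means the solution set is a singleton $\{x\}$. Hence $c_T$ is an injection from $E_T$ into the well-ordered set $(\mathcal D_T,\sqsubset_T)$.

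\textbf{Step 2: $\prec_T^+$ is a strict linear order.} Consider the three clauses of the definition. Irreflexivity and asymmetry are immediate from the corresponding properties of $\prec$ and $\sqsubset_T$. Transitivity is a short case split on whether each of the three points lies in $F$ or in $E_T\setminus F$. Totality on $E_T\setminus F$ uses the injectivity of $c_T$: if $x\neq y$ then $c_T(x)\neq c_T(y)$, so the two values are $\sqsubset_T$-comparable.

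\textbf{Step 3: well-foundedness and the initial segment.} Let $A\subseteq E_T$ be nonempty. If $A\cap F\neq\varnothing$, its $\prec$-least element is the $\prec_T^+$-least element of $A$, since every element of $F$ precedes every element of $E_T\setminus F$. Otherwise $c_T[A]$ is a nonempty subset of $\mathcal D_T$ and has a $\sqsubset_T$-least element $c_T(x_0)$; by injectivity, $x_0$ is the least element of $A$.

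Finally, $(F,\prec)$ is an initial segment of $(E_T,\prec_T^+)$. The order $\prec_T^+$ restricts to $\prec$ on $F$, by the first clause. Moreover, no element of $E_T\setminus F$ precedes an element of $F$, because no clause of the definition permits it.

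\textbf{Where the care lies.} The main point needing attention is Step 1. One must check that the explicit equation is a legitimate $\Lcal_D$-formula with parameters from $F$. One must also check that existence of a description is uniform in $x$, so that no choice of representation is needed: we use only the nonemptiness of the set of descriptions and then take its minimum. Everything else is routine manipulation of lexicographic well-orders in $\ZFm$.
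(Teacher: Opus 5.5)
Your proof is correct, and it is the routine verification the paper explicitly leaves to the reader. Descriptions exist by the displayed relation in Definition~\ref{def:formula-order}, and $c_T$ injects $E_T$ into the well-ordered set of pairs because a description determines its element; the three-clause case analysis then yields linearity, well-foundedness, and that $(F,\prec)$ is an initial segment. Your remark that only the nonemptiness of the set of descriptions is used, with no chosen representation, is exactly what keeps the argument choice-free in $\ZFm$.
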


For $P\in L[z,x]$, recall that $\coeff(P)$ denotes its finite set of coefficients
as a polynomial in two variables.

If $F$ is a field and $\prec$ is a well-ordering of $F$, we define $\prec_x$ to be the lexicographic well-ordering of $F[x]$ induced by $\prec$: to compare two distinct polynomials $p$ and $q$ in $F[x]$, first compare their degrees, and if they are equal, compare their dominant coefficients using $\prec$, and then continue comparing the coefficients of lower degree terms in decreasing order of degree until a difference is found.
Here we take $\deg(0)=-\infty$ and omitted coefficients to be zero.
That is, if $p(x)=\sum_{i=0}^n a_ix^i$ and $q(x)=\sum_{i=0}^m b_ix^i$, and $p\neq q$, let $\Delta(p,q)=\max\{i\in\omega:a_i\neq b_i\}$.
Then
\begin{align*}
    p\prec_x q\quad\Longleftrightarrow\quad
    \begin{cases}
        \deg(p)<\deg(q),\\
        \text{or } \deg(p)=\deg(q) \text{ and } a_n\prec b_n, \text{ where } n=\Delta(p,q).
    \end{cases}
\end{align*}

\begin{proof}[Proof of Theorem~\ref{lem:adjoin}]
Let $L/F$ be a normal separable algebraic extension, let $\prec$ be a
well-ordering of $F$ and let $\Ccal$ be a finite nonempty collection of
$F$-bases of $L$. Write
\[
 F^+=F\left(\bigcup_{E\in\Ecal(L,F)}\coeff(P_{\Ccal,E})\right).
\]
From these objects one can define a well-ordering of $F^+$ that has
$(F,\prec)$ as an initial segment.

More specifically, let
\begin{itemize}
    \item $\mathscr P_F=\{p\in F[x]:p\text{ is monic and splits into distinct linear factors in }L[x]\}\cup\{1\}$,
    \item $E_p=F(\{u\in L:p(u)=0\})$ for $p\in\mathscr P_F$,
    \item $c_{p,i,j}$ is the coefficient of $z^ix^j$ in $P_{\Ccal,E_p}(z,x)$ for
    $(p,i,j)\in\mathscr P_F\times\omega\times\omega$,
    \item $D=\mathscr P_F\times\omega\times\omega$
    is equipped with the lexicographic well-ordering $<_D$ induced by
    $\prec_x$ and the usual well-ordering of $\omega$,
    \item $\mathbf c=(c_{p,i,j})_{(p,i,j)\in D}$, and
    \item $T=(L,F,\prec,D,<_D,\mathbf c)$.
\end{itemize}
Then $T$ is an extension tuple, $E_T=F^+$, and
$\prec_T^+$ is a well-ordering of $F^+$ having
$(F,\prec)$ as an initial segment.

First, we show that $\Ecal(L,F)=\{E_p:p\in\mathscr P_F\}$.

For each $p\in\mathscr P_F$, the field $E_p$ is the splitting field of
$p$ over $F$. By Lemma~\ref{lem:splitting-normal}, $E_p/F$ is finite
and normal. It is also separable, since $E_p\subseteq L$. Thus
$E_p\in\Ecal(L,F)$.

Conversely, let $E\in\Ecal(L,F)$. By
Lemma~\ref{lem:splitting-normal}, $E$ is the splitting field of some
nonzero $q\in F[x]$. When $q$ is
constant, we have $p=1$ and $E=F$.
Otherwise, let $q_0,\ldots,q_{r-1}$ be its distinct monic
irreducible factors and put $p=\prod_{i<r}q_i$. Each $q_i$ splits in
$E$ by normality, and its roots are all distinct since they belong to the separable
extension $E/F$. Distinct $q_i$'s have no common root as they are irreducible and monic, so
$p\in\mathscr P_F$. The polynomials $p$ and $q$ have the same roots,
and these roots generate $E$ over $F$. Hence $E=E_p$.

Now let $S=\bigcup_{E\in\Ecal(L,F)}\coeff(P_{\Ccal,E})$.
It follows that $S\cup\{0\}=\{c_{p,i,j}:(p,i,j)\in D\}$, so $F^+=F(S)=F(\mathbf c)$, so that $E_T=F^+$.
By Lemma~\ref{lem:orderExtension},
$\prec_T^+$ is a well-ordering of $E_T=F^+$ having
$(F,\prec)$ as an initial segment.

If $F^+=F$, then $P_{\Ccal,E}\in F[z,x]$ for every
$E\in\Ecal(L,F)$. Theorem~\ref{lem:finite-bases} therefore gives
$[L:F]<\omega$, as required.
\end{proof}

We now prove Theorem~\ref{thm:algebraic}, using $\MC$.

Under the stated hypotheses, it extends a given well-ordering of a field $K$ to a normal separable algebraic extension $L/K$.
The idea is to recursively construct an increasing chain of well-ordered intermediate fields, starting with $K$ and applying Theorem~\ref{lem:adjoin} at each successor stage in an attempt to reach an intermediate field satisfying the hypotheses of Theorem~\ref{lem:finite-bases}.
The Hartogs number $h(\Pow(L))$ guarantees that this chain eventually stabilizes.
When the chain finally stabilizes, Theorem~\ref{lem:finite-bases} shows that the remaining extension has finite degree, so a final application of Lemma~\ref{lem:orderExtension} yields the desired well-ordering of \(L\).

\begin{proof}[Proof of Theorem~\ref{thm:algebraic}]
Let $\mathscr S=\{F:K\subseteq F\subseteq L,
                 \ F\text{ is a well-orderable subfield of }L\}$.

For each $F\in\mathscr S$, let $\Bas_F(L)=\{B\subseteq L:B\text{ is an }F\text{-basis of }L\}$.
By hypothesis, $\Bas_F(L)$ is nonempty for each $F\in\mathscr S$.
By $\MC$, there exists $(\Ccal_F)_{F\in\mathscr S}$ such that $\varnothing\ne\Ccal_F\subseteq\Bas_F(L)$ and $\Ccal_F$ is finite for each $F\in\mathscr S$.

Let $\kappa=h(\Pow(L))$, the Hartogs number of $\Pow(L)$.
We may assume that $L$ is infinite (otherwise the theorem is trivial), so $\kappa$ is an infinite cardinal.
Recursively define $(F_\alpha,\prec_\alpha)$ for $\alpha<\kappa$, where $F_\alpha$ is a subfield of $L$ well-ordered by $\prec_\alpha$, along with extension tuples $(T_\alpha)_{\alpha<\kappa}$, families $(D_\alpha)_{\alpha<\kappa}$, well-orderings $(<_\alpha)_{\alpha<\kappa}$ of $D_\alpha$, and families $(\mathbf c_\alpha)_{\alpha<\kappa}$ in $L$ such that, for every $\alpha<\kappa$,
\begin{enumerate}
    \item $(F_0,\prec_0)=(K,\prec)$,
    \item $T_\alpha=(L,F_\alpha,\prec_\alpha,D_\alpha,<_\alpha,\mathbf c_\alpha)$,
    \item $D_\alpha=\mathscr P_{F_\alpha}\times\omega\times\omega$,
    \item $<_\alpha$ is the lexicographic well-ordering of $D_\alpha$ induced by $\prec_{\alpha,x}$ and the usual well-ordering of $\omega$,
    \item $\mathbf c_\alpha=(c_{p,i,j})_{(p,i,j)\in D_\alpha}$, where $c_{p,i,j}$ is the coefficient of $z^ix^j$ in $P_{\Ccal_{F_\alpha},E_p}$ for $(p,i,j)\in D_\alpha$, and $E_p=F_\alpha(\{u\in L:p(u)=0\})$ for $p\in\mathscr P_{F_\alpha}$.
    \item $F_{\alpha+1}=E_{T_\alpha}=F_\alpha\left(\bigcup_{E\in\Ecal(L,F_\alpha)}\coeff(P_{\Ccal_{F_\alpha},E})\right)$,
    \item $\prec_{\alpha+1}=\prec_{T_\alpha}^+$.
    \item $(F_\beta,\prec_\beta)$ is an initial segment of $(F_{\alpha},\prec_{\alpha})$ for every $\beta<\alpha$.
    \item $F_\alpha=\bigcup_{\beta<\alpha}F_\beta$ and $\prec_\alpha=\bigcup_{\beta<\alpha}\prec_\beta$ for limit ordinals $\alpha<\kappa$.
\end{enumerate}
At each successor stage, $L/F_\alpha$ is again a normal separable
algebraic extension, since the minimal polynomial over $F_\alpha$ of
an element of $L$ divides its minimal polynomial over $K$. Thus
Theorem~\ref{lem:adjoin} applies. At a limit stage, the union of the
preceding fields is a field, and the union of their well-orderings is
a well-ordering because they form an increasing chain of initial
segments. Hence the recursion can be carried out as specified.

For every $\beta<\alpha<\kappa$, we have $F_\beta\subseteq F_\alpha$.
Suppose for a contradiction that for every $\alpha<\kappa$ we have $F_{\alpha+1}\ne F_\alpha$.
Then $(F_\alpha)_{\alpha<\kappa}$ is a strictly increasing chain of subfields of $L$ of length $\kappa$, yielding an injection of $\kappa$ into $\Pow(L)$, contrary to the definition of $\kappa$.

Let $\alpha$ be the first ordinal such that $F_{\alpha+1}=F_\alpha$, and write $F=F_\alpha$.
Then, by the definition of $F_{\alpha+1}$, all the coefficients of
$P_{\Ccal_F,E}$ lie in $F$ for every $E\in\Ecal(L,F)$.
Theorem~\ref{lem:finite-bases} then implies $[L:F]<\omega$.

The field $F$ is well-ordered by $\prec_\alpha$.
Take a finite basis $\mathbf b=(b_i)_{i<m}$ of $L/F$, and let
$D$ be a copy of $m$ disjoint from $\Symbols$ with its usual order $<_D$. Then $T=(L,F,\prec_\alpha,D,<_D,\mathbf b)$ is an
extension tuple with $E_T=L$. Lemma~\ref{lem:orderExtension} shows
that $\prec_T^+$ has $(F_\alpha,\prec_\alpha)$ as an
initial segment. Since $(K,\prec)$ is an initial segment of
$(F_\alpha,\prec_\alpha)$, it is also an initial segment of this
well-ordering of $L$.
\end{proof}

\section{Embedding a set into a field extension}\label{sec:main-proof}

\begin{proof}[Proof of Theorem~\ref{prop:embedding}]
For $X=\varnothing$, take $K=L=\Q$. Otherwise, by
Lemma~\ref{lem:partition}, fix a partition $(X_\alpha)_{\alpha<\lambda}$ of $X$ into nonempty finite blocks, and put $n_\alpha=|X_\alpha|$.
Let $\Q[t_x:x\in X]$ be the polynomial ring in distinct indeterminates $(t_x)_{x\in X}$ over $\Q$, and let $L$ be its field of fractions:
\[
 L=\Q(t_x:x\in X)
\]
The map $x\mapsto t_x$ is injective and $L$ has a natural embedding of $\Q$ into it.
We switch to it and maintain the notation $\Q$, so $\Q\subseteq L$.

For each $\alpha<\lambda$, define
\[
 p_\alpha(y)=\prod_{u\in X_\alpha}(y-t_u)
            =y^{n_\alpha}+\sum_{j<n_\alpha}c_{\alpha,j}y^j
            \in L[y],
\]
Here the coefficients are canonically indexed by the powers $y^j$ they
multiply, so this indexing requires no choice.
Let $K$ be the subfield of $L$ generated by all the coefficients
of these polynomials:
\[
 K=\Q(c_{\alpha,j}:\alpha<\lambda,\ j<n_\alpha).
\]
Thus each $p_\alpha$ belongs to $K[y]$, splits in $L$, and has distinct roots.
Put
\[
 D=\{(\alpha,j):\alpha<\lambda,\ j<n_\alpha\},
 \qquad \mathbf c=(c_{\alpha,j})_{(\alpha,j)\in D}.
\]
Let $<_D$ be its natural lexicographic well-ordering.
As $\Q$ is countable in $\ZFm$, fix a well-ordering $\prec_{\Q}$ of $\Q$,
and define an extension tuple $T=(L,\Q,\prec_{\Q},D,<_D,\mathbf c)$.
Notice that $E_T=K$, so Lemma~\ref{lem:orderExtension} gives a well-ordering
$\prec_K=\prec_T^+$ of $K$, which contains $(\Q,\prec_{\Q})$
as an initial segment.

For any finite
$J\subseteq\lambda$, the field
\[
 E_J=K(t_x:x\in\bigcup_{\alpha\in J}X_\alpha)
\]
is the splitting field inside $L$ of
$\prod_{\alpha\in J}p_\alpha$. By Lemma~\ref{lem:splitting-normal},
it is finite and normal over $K$, and it is separable by Lemma~\ref{lem:normal-separable}. The fields $E_J$ form a directed family
whose union is $L$, hence $L/K$ is normal, separable and algebraic.
Explicitly:
\begin{itemize}
    \item $L/K$ is algebraic: given any $a \in L$, there exists a finite $J\subseteq\lambda$ such that $a\in E_J$.
    As $E_J/K$ is algebraic, it follows that $a$ is algebraic over $K$.
    \item $L/K$ is normal: let $a \in L$ and let $p(y)\in K[y]$ be the minimal polynomial of $a$ over $K$. There exists a finite $J\subseteq\lambda$ such that $a\in E_J$. Since $E_J/K$ is normal, $p(y)$ splits in $E_J$, and hence it splits in $L$.
    \item $L/K$ is separable as $K$ has characteristic zero, by Lemma~\ref{lem:normal-separable}.
\end{itemize}
\end{proof}

The proof of Theorem~\ref{thm:main} in Section~\ref{sec:structure} also gives the following.

\begin{corollary}\label{cor:restricted}
In $\ZFm$, the following are equivalent:
\begin{enumerate}
\item $\AC$;
\item $\MC$+``every vector space over every well-orderable field of characteristic zero has a basis''.
\end{enumerate}
\end{corollary}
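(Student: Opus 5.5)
The implication from (1) to (2) is the routine direction. Assuming $\AC$, $\MC$ holds trivially, since a choice function $f$ yields the finite sets $\{f(i)\}$. Every vector space has a basis by the usual Zorn's lemma argument, which uses neither Foundation nor anything beyond $\AC$ in $\ZFm$. Zorn's lemma follows from $\AC$ in $\ZFm$ by the standard transfinite recursion along a well-ordering, which is Foundation-free.

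For (2) implies (1), the plan is to rerun the proof of Theorem~\ref{thm:main} with Blass's Theorem~\ref{thm:blass} removed, since $\MC$ is now an explicit hypothesis. Fix an arbitrary set $X$. Theorem~\ref{prop:embedding}, which needs only $\MC$, gives a well-orderable field $K$ of characteristic zero, a normal separable algebraic extension $L/K$, and an injection $X\to L$. We then fix a well-ordering $\prec$ of $K$ and apply Theorem~\ref{thm:algebraic}, which again needs $\MC$.

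The hypothesis of Theorem~\ref{thm:algebraic} to verify is that for every \emph{well-orderable} intermediate field $F$ with $K\subseteq F\subseteq L$, the $F$-vector space $L$ has a basis. This is exactly where the weakened hypothesis suffices. Such an $F$ contains $K$, so it has characteristic zero, and it is well-orderable by assumption. Hence (2) applies directly and provides a basis of $L$ over $F$. Theorem~\ref{thm:algebraic} then gives a well-ordering of $L$, which pulls back along the injection to a well-ordering of $X$. Since $X$ was arbitrary, every set is well-orderable, and $\AC$ follows in $\ZFm$.

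The only point needing care, rather than a genuine obstacle, is confirming that the proofs of Theorems~\ref{thm:algebraic} and~\ref{prop:embedding} invoke the basis hypothesis only for well-orderable intermediate fields. Inspecting the proof of Theorem~\ref{thm:algebraic}, $\MC$ is applied to $\Bas_F(L)$ only for $F\in\mathscr S$, the set of well-orderable intermediate fields. Theorem~\ref{lem:adjoin} and Theorem~\ref{lem:finite-bases} are used only at stages $F_\alpha$ that carry the well-ordering $\prec_\alpha$. So the restricted hypothesis is all that is used, and the corollary follows.
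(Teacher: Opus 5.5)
Your proposal is correct and follows the paper's argument. The paper obtains this corollary by rerunning the proof of Theorem~\ref{thm:main}, with $\MC$ taken as an explicit hypothesis in place of Theorem~\ref{thm:blass}; the basis hypothesis of Theorem~\ref{thm:algebraic} is needed only over well-orderable intermediate fields, and these have characteristic zero. Your direction (1)$\Rightarrow$(2), via Zorn's lemma in $\ZFm$, is the intended routine argument.
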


\section{Concluding remarks}\label{sec:conclusion}

Theorem~\ref{thm:main} shows that Foundation is unnecessary for the
equivalence between the existence of vector space bases and $\AC$,
settling the questions that appear in
\cites[p.~33]{Blass}{Karagila2017MO}[p.~423]{Morillon}[p.~121]{Philip2025}.

Blass~\cite[p.~33]{Blass} also asked whether, without Foundation, $\AC$
follows from the assertion that every linearly independent set in a
vector space can be extended to a basis. The following corollary
answers this affirmatively and completes the following collection of
equivalences. The equivalence of $\AC$ with the assertion that every
spanning subset of a vector space contains a basis was already proved
by Halpern~\cite{Halpern1966} without Foundation, even allowing atoms
(see also Blass~\cite[p.~31]{Blass}).

\begin{corollary}\label{cor:basis-principles}
In $\ZFm$, the following statements are equivalent:
\begin{enumerate}
\item $\AC$;
\item every vector space has a basis;
\item every vector space over a field of characteristic zero has a basis;
\item every linearly independent subset of a vector space is contained
in a basis of that space;
\item every spanning subset of a vector space contains a basis of
that space.
\end{enumerate}
\end{corollary}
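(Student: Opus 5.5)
The equivalences (1)$\Rightarrow$(2)$\Rightarrow$(3) need no new work. Zorn's lemma gives bases in $\ZFm$ without Foundation, and (3) is a special case of (2). The implication (3)$\Rightarrow$(1) is Theorem~\ref{thm:main}. Likewise (1)$\Rightarrow$(4) and (1)$\Rightarrow$(5) are the usual Zorn's lemma arguments: take a maximal linearly independent set containing the given one, or a maximal independent subset of the spanning set. Halpern's theorem, quoted above, gives (5)$\Rightarrow$(1) in $\ZFm$. So the only new implication is (4)$\Rightarrow$(1), and I would obtain it by reducing (4) to (3).

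For (4)$\Rightarrow$(3), let $V$ be a vector space over a field $F$ of characteristic zero. The empty set is linearly independent in $V$, so (4) gives a basis of $V$ containing $\varnothing$, which is simply a basis of $V$. Hence (4) implies (2), and in particular (3). Theorem~\ref{thm:main} then gives $\AC$.

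If one prefers not to route (5) through Halpern's result, I would try to derive (3) from (5) in the same spirit: $V$ spans itself, so (5) yields a basis contained in $V$. This gives (5)$\Rightarrow$(2)$\Rightarrow$(1) directly from Theorem~\ref{thm:main}, and Halpern's citation is then unnecessary.

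There is no real obstacle. The one point to check is that the Zorn's lemma proofs of (1)$\Rightarrow$(2), (4), (5) use no Foundation. They do not, since Zorn's lemma follows from $\AC$ in $\ZFm$ by transfinite recursion along a well-ordering, without any rank argument.
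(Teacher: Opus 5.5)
Your proposal is correct and matches the paper's proof: Zorn's lemma gives (1)$\Rightarrow$(4),(5); taking $I=\varnothing$ in (4) or $S=V$ in (5) gives (2); (2)$\Rightarrow$(3) is trivial; and Theorem~\ref{thm:main} gives (3)$\Rightarrow$(1). For (5), the paper uses your direct $S=V$ route rather than Halpern's theorem, so that citation is not needed.
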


\begin{proof}
Assume $(1)$. Given a linearly independent subset $I$ of a vector
space $V$, Zorn's lemma yields a maximal linearly independent subset
of $V$ containing $I$. Such a subset spans $V$ and is therefore a
basis, proving $(4)$. Similarly, given a spanning subset $S$ of $V$,
Zorn's lemma yields a maximal linearly independent subset $B$ of $S$.
Maximality implies that $S$ is contained in the span of $B$, so $B$
is a basis of $V$, proving $(5)$. These applications of Zorn's lemma
do not require Foundation.

For an arbitrary vector space $V$, taking $I=\varnothing$ in $(4)$,
or $S=V$ in $(5)$, yields a basis of $V$. Hence both $(4)$ and $(5)$
imply $(2)$. Clearly, $(2)$ implies $(3)$. Finally, $(3)\Rightarrow(1)$ follows from Theorem~\ref{thm:main}.
\end{proof}

The argument also works in set theory with atoms. We use
$\mathsf{ZFA}$ in the sense of Jech~\cite[\S4.1, pp.~44--45]{Jech1973},
including its axiom of Regularity, and write $\mathsf{ZFA}^{-}$ for
the same theory with that axiom omitted. The argument works already
in $\mathsf{ZFA}^{-}$.
Indeed, Blass's implication and L\'evy's partition lemma hold in that
setting. In Theorem~\ref{prop:embedding}, the elements of $X$
serve only as indices for indeterminates, so the construction applies
to sets containing atoms. The subsequent arguments use finite
field extensions, sets of bases and subfields, and recursion on an
ordinal bounded by a Hartogs number, and all these constructions are
available in $\mathsf{ZFA}^{-}$.
Thus Theorem~\ref{thm:main} and
Corollary~\ref{cor:basis-principles} hold in $\mathsf{ZFA}^{-}$, and hence in $\mathsf{ZFA}$, as well.

Halbeisen~\cite[Note~94]{Halbeisen2012} asks whether, in $\mathsf{ZFA}$,
$\AC$ follows from the assertion that every vector space has a basis,
or at least from the stronger assertion that every linearly independent
subset of a vector space extends to a basis of that space. Our results
answer both questions affirmatively.

\subsection*{Acknowledgements}
This study was financed, in part, by the S\~ao Paulo Research Foundation
(FAPESP), Brasil. Process Numbers \#25/07302-0 and \#25/09425-1.

\subsection*{Declaration of AI assistance}
OpenAI's Codex, using the GPT-6 Astra model,
was used to generate novel mathematical content and assist with the writing process.
The assistance also included grammatical suggestions and bibliographical references.
All the output and suggestions were reviewed, edited and/or rewritten by the authors.

\printbibliography

\end{document}